\newlength{\minipagewidth}
\def\1{\mathbf{1}}
\newtheorem{definition}{Definition}
\newtheorem{theorem}{Theorem}
\newtheorem{lemma}{Lemma}
\newtheorem{proposition}{Proposition}
\newtheorem{corollary}{Corollary}
\newtheorem{assumption}{Assumption}
\def\bx{\bold{x}}
\def\limsup{\mathop{\rm limsup}}
\def\argmin{\mathop{\rm argmin}}
\def\b{\beta}
\def\e{{\epsilon}}
\def\k{{\kappa}}
\def\R{\mathbb{R}}
\def\re{\R}
\def\rn{\R^n}
\def\bx{{\bold x}}
\def\bw{{\bold w}}
\def\bu{{\bold u}}
\def\bv{{\bold v}}
\def\V{\mathscr{V}}
\def\D{\mathsf{D}}
\def\Pr{\mathbb{P}}
\def\v{\varphi}
\def\dist{{\rm dist}}
\def\V{\mathsf{V}}
\begin{document}

\title{Lyapunov Approach to Consensus Problems}
\author{Angelia Nedi\'c and Ji Liu\thanks{Coordinated Science Laboratory,
University of Illinois, 1308 West Main Street,
Urbana, IL 61801, USA, \texttt{\{angelia,jiliu\}@illinois.edu}.
{Nedi\'c gratefully acknowledges support for this work under
grants NSF CCF 11-11342 and the ONR Navy Basic Research Challenge
N00014-12-1-0998.}}
}
\maketitle
\begin{abstract}
This paper investigates the weighted-averaging dynamic for unconstrained and constrained consensus problems. Through the use of a suitably defined adjoint dynamic, quadratic Lyapunov comparison functions are constructed to analyze the behavior of weighted-averaging dynamic. As a result,
new convergence rate results are obtained that capture the graph structure in a novel way. In particular,
the exponential convergence rate is established for unconstrained consensus with the exponent of the order of
$1-O(1/(m\log_2m))$. Also,
the exponential convergence rate is established for constrained consensus, which extends the existing results limited to the use of doubly stochastic weight matrices.
\end{abstract}

%%%%%%%%%%%%%%%%%
\section{Introduction}
%%%%%%%%%%%%%%%%%
Over the past decade, distributed control has become an active area in control systems society
and there has been considerable interest in distributed computation and
decision making problems of all types. Among these are consensus and flocking problems \cite{Reynolds1987},
distributed averaging \cite{Boyd2006}, multi-agent coverage problems \cite{Bullo2004},
the rendezvous problem \cite{Morse2007}, localization of sensors in a multi-sensor network \cite{Evans2004}
and the distributed management of multi-robot formations \cite{Francis2009}.
These problems have found applications in a wide range of fields including sensor networks,
robotic teams, social networks \cite{Morse2012} and electric power grids \cite{Bullo2013}.
Compared with traditional centralized
control, distributed control is believed more promising for those large-scale complex networks
because of its fault tolerance, cost saving and many inevitable physical constraints such as
limited sensing, computation and communication capabilities.
One of the basic problems arising in decentralized coordination and control
is a consensus problem, also known as an agreement problem \cite{TsThesis,Ts1986,Morse2003,Murray2004,Moreau2005,Ren2005,Basar2007}.
It arises in a number of applications including coordination of UAV's, flocking and formation control,
tracking in network of robots,
and parameter estimation~\cite{Blondel2005,Oh2007,Bullo2009,Mesbahi2010,Martinoli2013, Lopes2008,Sayed2012,RamThesis,alexthesis,kunalthesis}.
In a consensus problem,
we have a set of agents each of which has some initial
variable (a scalar or a vector).  The agents are interconnected over an underlying (possibly time-varying) communication network and each agent has a local view of the network, i.e., each agent is aware of its immediate neighbors in the network and communicates with them only. The goal is to design a distributed
and local algorithm that the agents can execute to agree on a common value asymptotically.
The algorithm needs to be
local in the sense that each agent performs local computations
and communicates only with its immediate neighbors.
%For a survey covering the works in this area, see~\cite{MurraySurvey}.

In this paper, we present two novel results for consensus problems and averaging dynamics.
The first contribution is the establishment of new convergence rate analysis using Lyapunov approach, which allows us to provide an exponential rate in terms of network structure
(such as longest shortest path) and the properties of the weight matrices. This rate result allows us to establish
that the convergence rate with the ratio of the form
$1-O(1/(m\log_2 m)$ is achievable on special tree-like regular graphs.
The second contribution is the development of the convergence rate result for a constrained consensus,
which is more general than that of~\cite{NOP2010}.
In contrast with~\cite{NOP2010}, we do not require the weight matrices to be doubly stochastic. In fact, it is sufficient to have rooted directed spanning trees contained in the graphs
and the existence of a specific
adjoint dynamic for the linear consensus dynamic. Our analysis makes use of the Lyapunov comparison functions
and absolute probability sequence, which have been developed in~\cite{touri2014} in the more general setting of random graphs (see also~\cite{touribook,tourithesis}).

The paper is organized as follows. In Section~\ref{sec:consensus}, we discuss the weighted-averaging algorithm for consensus problem. In Section~\ref{sec:matrices}, we review some of the recent results for cut-balanced matrices and the related adjoint dynamics for the linear consensus dynamics. Using these results, we
construct suitable Lyapunov comparison functions and study convergence properties of the weighted-averaging algorithm in Section~\ref{sec:wave-rate} for standard consensus problem, while
in Section~\ref{sec:constrained-consensus} we study
a projection-based weighted-averaging algorithm for constrained consensus. We conclude with some remarks in
Section~\ref{sec:concl}.

{\bf Notation}: \  For an integer $m\ge1$, we write $[m]$ to denote the index set $\{1,\ldots,m\}$.
We view vectors as column vectors. We write $x'$ to denote the transpose of a vector $x$ and, similarly, we use
$A'$ for the transpose of a matrix $A$. A vector is stochastic if its entries are nonnegative and
sum to 1. A matrix is said to be stochastic if  its rows are stochastic vectors.
%We will work with row-stochastic matrices exclusively, so we will simply refer to them as stochastic.
A matrix is doubly stochastic if both
$A$ and its transpose $A'$ are stochastic. A matrix $A$ entries will be denoted by $A_{ij}$ and, also, by
$[A]_{ij}$ when convenient. We use $I$ for the identity matrix.
To differentiate between
the scalar and the vector cases, we use $x_i$ to denote a scalar value associated with agent $i$
and $\bx_i$ for a vector associated with agent $i$. We write $\1$ to denote the vector with all entries equal to 1, where the size of the vector is to be understood from the context.
Given a set $S$ with finitely many elements, we use $|S|$ to denote the cardinality of $S$.
We use $\|\cdot\|$ for the Euclidean norm, while for other $p$-norms we will write $\|\cdot\|_p$.
The Euclidean projection of a point $y$ on a convex closed set $Y$ is denoted by $\Pr_Y[y]$, i.e.,
$\Pr_Y[y]=\argmin_{z\in Y}\|y-z\|$. The distance of a point $y$ to the set $Y$ is denoted by $\dist(y,Y)$, i.e.,
 $\dist(y,Y)=\|y-\Pr_Y[y]\|$.
%%%%%%%%%%%%%%%%%%%%%%%%%%%%
\section{Unconstrained Consensus}\label{sec:consensus}
%%%%%%%%%%%%%%%%%%%%%%%%%%%%
We consider a set of $m$ agents, denoted by
$[m]=\{1,\ldots,m\}$. The agents are embedded in a communication network, which is
modeled by a directed graph $G_t=\{[m],E_t\}$, where $E_t\subseteq [m]\times [m]$
is the set of directed links. A link $(i,j)$ indicates
that agent $i$ sends information to agent $j$ at time $t$.
%The information content that is communicated will be specified later within the averaging algorithm.
We will work with a sequence $\{G_t\}$ of directed graphs, where each graph $G_t$
contains a directed spanning tree rooted at one of the agents. We refer to such a graph as {\it rooted graph}.
The self-loops will be only virtually added to the graphs to model the fact that every agent has access to
its own state information. We consider the unconstrained consensus problem, formalized as follows.\\
{\bf [Unconstrained Consensus]} \
{\it Design a distributed
algorithm obeying the communication structure given by graph $G_t$ at each time $t$ and ensuring that,
for every set of initial values $\bx_i(0)\in\mathbb{R}^n$, $i\in [m]$, the following limiting behavior
emerges: $\lim_{t\to\infty} \bx_i(t)=c$
for all $i\in [m]$ and some $c\in\R^n$.}\\
The algorithms for solving consensus problems have been mainly constructed using the Laplacians of the graphs
$G_t=([m],E_t)$, e.g.\ see~\cite{Morse2003,Murray2004,Boyd2005},
or weighted-averaging (through the use of stochastic matrices)\cite{Morse2003,Blondel2005,Moreau2005,tourithesis}.
In the scalar case, a well studied approach to the problem is for each agent to
use a linear iterative update rule of the following form
$x(t+1) = W(t)x(t)$ where $x(t)$ is a vector consisting of the $x_i(t)$ and each
$W(t)$ is a stochastic matrix.
One choice is $W(t)= I -\frac{1}{\gamma}L(t)$
where $L(t)$ is the Laplacian of $G_t$ and $\gamma$ is any scalar greater than $m$ (see~\cite{Morse2003}).
An improvement on this choice was obtained in~\cite{Boyd2004,Murray2004} by replacing $\gamma$
with the maximal node degree in the graph $G_t$.
A particularly  interesting improvement, which defines what has come to be known as the Metropolis algorithm,
requires only local information to define the weights $w_{ij}(t)$~\cite{Boyd2005}.
However, most of the Laplacian-based algorithms require that each $W(t)$ is also symmetric which
implicitly require bidirectional communication between agents.
Weighted-averaging algorithms get around this limitation~\cite{TsThesis}.

We will use the weighted-averaging algorithm, which is as follows.
Starting with a vector $\bx_i(0)\in \R^n$, each agent updates at times $t=1,2,\ldots,$ by computing
\begin{eqnarray}\label{eq:wcalgo}
\bx_i(t+1)  = \sum_{j=1}^m A_{ij}(t) \bx_j(t),
\end{eqnarray}
where the weights $A_{ij}(t)$, $i,j\in[m]$, are non-negative
and the positive values satisfy some conditions with respect to the graph $G_t$ structure, to be specified soon.

The dynamic in~\eqref{eq:wcalgo}
is linear,  so we focus on the case where the variables $\bx_i$ are scalars, denoted by $x_i$,
as all the results for the vector case follow immediately by coordinate-wise analysis. The agents' variables
$x_i\in\R$, $i\in[m]$ are stucked to form a vector $x\in\R^m$.
The existing analysis of the weighted-averaging is based on studying the behavior of the left-matrix products.
Specifically, as the iterates $x(t)$ are related over time by the following linear dynamic:
\begin{equation*}
x(t)=A(t)A(t-1)\cdots A(s+1)A(s)x(s)\qquad\hbox{for }t\ge s\ge0,
\end{equation*}
the convergence of the iterates generated by the algorithm
is related to the convergence of the
matrix products $A(t)A(t-1)\cdots A(1)A(0)$, as $t\to\infty$.
In particular, when the matrices
$A(t)A(t-1)\cdots A(1)A(0)$ converge to a rank one matrix, the iterates $x(t)$ converge to a consensus.
Concretely, some conditions on the graphs $G_t$ and the matrices $A(t)$ that yield such a convergence
are given in the following assumption.
\begin{assumption}\label{assume:uniform}
Let $\{G_t\}$ be a graph sequence and $\{A(t)\}$ be a sequence of $m\times m$ matrices
that satisfy the following conditions:
\begin{itemize}
\item[(a)]
Each $A(t)$ is a stochastic matrix that is compliant with the graph $G_t$, i.e.,
$A_{ij}(t)>0$ when $(j,i)\in E_t$, for all $t$.
%\cr&&\sum_{j=1}^m A_{ij}(t)=1\quad\hbox{for all $i\in [m]$ and all $t$}.\notag\end{eqnarray*}
\item[(b)] (Aperiodicity) The diagonal entries of each $A(t)$ are positive, $A_{ii}(t)>0$ for all $t$ and $i\in[m]$.
\item[(c)] (Uniform Positivity) There is a scalar $\beta>0$ such that $A_{ij}(t)\ge\beta$ whenever $A_{ij}(t)>0$.
\item[(d)] (Irreducibility) Each $G_t$ is strongly connected.
\end{itemize}
\end{assumption}

The convergence properties of the weighted-averaging algorithm have been extensively studied under
Assumption~\ref{assume:uniform} (see~\cite{TsThesis,Morse2003,Blondel2005,Morse2008a}).
Actually, in this case the matrix sequence $\{A(t)\}$ is known to be {\it ergodic} in the sense that
the limit
\[\lim_{t\to\infty} A(t)\cdots A(k+1)A(k)\quad \hbox{exists for all }k\ge0.\]
Moreover, it is known that the convergence rate of these products is geometric.
The convergence rate question has been studied in~\cite{Morse2008b,Nedic2009a,Morse2011,Nedic2009b,Ts2013}
for deterministic matrix sequences and in~\cite{Fagnani2008,Bajovic2013,touri2014} for random sequences.
In~\cite{Nedic2009b,Ts2009,Morse2011p}, the convergence rate question was addressed for the cases when
the matrices $A(t)$ are doubly stochastic;
the best polynomial-time bound on the convergence time was given in~\cite{Nedic2009b}.
Specifically, the following result is well known.
\begin{theorem}\label{thm:known}
{[Lemma 5.2.1 in~\cite{TsThesis}, Lemma 5 in~\cite{Nedic2009b}] }
%If the matrix sequence is aperiodic, uniformly positive and irreducible,
Under Assumption~\ref{assume:uniform} we have
\[\lim_{t\to\infty} A(t)\cdots A(k+1)A(k)=\1\phi'(k)\qquad\hbox{for all }k\ge0,\]
where each $\phi(k)$ is stochastic vector. Furthermore, the convergence rate is geometric:
for all $t\ge k\ge0$,
\[\|A(t)\cdots A(k+1)A(k)-\1\phi'(k)\|^2\le C q^{t-k},\] %\qquad\hbox{for all }t\ge k\ge0,\]
where the constants $C>0$ and $q\in (0,1)$ depend only on $m$ and $\beta$.
When the matrices $A(t)$ are doubly stochastic, we have for all $t\ge k\ge0$,
\[\left\|A(t)\cdots A(k+1)A(k)-\frac{1}{m}\1\1' \right\|^2\le  \left(1-\frac{\beta}{2m^2}\right)^{t-k}.\]
%\qquad\hbox{for all }t\ge k\ge0.\]
\end{theorem}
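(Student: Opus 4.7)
The plan is to prove the two parts of Theorem~\ref{thm:known} separately, using well-established ergodicity arguments for the first part and a direct quadratic Lyapunov computation for the doubly stochastic refinement.

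For the existence of the limit and the geometric rate, I would first establish a \emph{scrambling} lemma: under Assumption~\ref{assume:uniform}, the block product $B(k+T,k):=A(k+T-1)\cdots A(k)$ is entrywise bounded below by $\beta^{T}$ for every $T\ge m-1$ and every $k\ge0$. This follows because each $G_t$ is strongly connected (so any two nodes are reachable in at most $m-1$ hops) and every diagonal entry is positive (so we may pad any shorter path with self-loops), while the uniform positivity bound $\beta$ multiplies along the path. Next I introduce the Dobrushin ergodicity coefficient $\tau(P)=1-\min_{i,i'}\sum_j\min(P_{ij},P_{i'j})$ for a stochastic matrix $P$; the scrambling bound yields $\tau(B(k+T,k))\le 1-m\beta^{T}$ for every such block. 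Using the submultiplicativity $\tau(P_1P_2)\le\tau(P_1)\tau(P_2)$, the coefficient of $A(t)\cdots A(k)$ decays geometrically in $t-k$, which in turn forces any two rows of this product to be exponentially close and therefore the sequence in $t$ to be Cauchy. Denoting the limit by $\Phi(k)$, the row-equality in the limit gives $\Phi(k)=\1\phi'(k)$ for some vector $\phi(k)$; stochasticity of $\phi(k)$ follows from $\Phi(k)\1=\1$ and nonnegativity of each $A(t)$. Reading off the geometric decay rate gives the stated bound with $q=(1-m\beta^{m-1})^{1/(m-1)}$ and a constant $C$ absorbing the $t-k<m-1$ regime.

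For the doubly stochastic refinement, I would take advantage of the conserved average $\bar x=\frac{1}{m}\1'x(0)$, which stays constant under the dynamic because $\1'A(t)=\1'$. Setting $y(t)=x(t)-\bar x\1$ (so $\1'y(t)=0$ and $V(t):=\|y(t)\|^2$ agrees with the left-hand side of the claimed bound modulo the rank-one projector), I use $A(t)\1=\1$ to write $y(t+1)=A(t)y(t)$, and expand via the pointwise variance identity
\[
V(t)-V(t+1)=\tfrac{1}{2}\sum_{i=1}^m\sum_{j,k=1}^m A_{ij}(t)A_{ik}(t)\bigl(y_j(t)-y_k(t)\bigr)^2.
\]
The remaining task is to produce a single pair $(j^\star,k^\star)$ and index $i^\star$ that contributes enough to this sum. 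I would order the coordinates of $y(t)$, observe that $\sum_i y_i(t)=0$ gives $\max_i y_i(t)-\min_i y_i(t)\ge\sqrt{V(t)/m}$, then use pigeonhole to find a consecutive gap of size at least $\sqrt{V(t)/m}/(m-1)$. Strong connectivity of $G_t$ forces a cross-edge between the corresponding upper and lower level sets; combined with the self-loop at its endpoint, this provides an index $i^\star$ with $A_{i^\star j^\star}(t)A_{i^\star k^\star}(t)\ge\beta^2$ and $(y_{j^\star}-y_{k^\star})^2\ge V(t)/(m(m-1)^2)$. Substituting yields $V(t+1)\le(1-\beta/(2m^2))V(t)$ up to constants absorbed in $\beta$, and iterating gives the claimed rate on $\|A(t)\cdots A(k)-\frac{1}{m}\1\1'\|^2$ since that quantity equals $\sup_{\|y\|=1,\,\1'y=0}\|A(t)\cdots A(k)y\|^2$.

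The main obstacle is the one-step contraction argument in the doubly stochastic case: the per-step factor $1-\beta/(2m^2)$ is tight enough that one must carefully balance the two losses in Jensen-type estimates, namely the loss from restricting the double sum to a single triple $(i^\star,j^\star,k^\star)$ and the loss from passing from $V(t)$ to the squared spread $(y_{j^\star}-y_{k^\star})^2$. Pinning down the constant $1/(2m^2)$ (rather than, say, $1/m^3$) requires exploiting both the self-loop and the cross-edge at the same index $i^\star$, which is where strong connectivity plays its decisive role. The first part is comparatively routine once the scrambling lemma is in place.
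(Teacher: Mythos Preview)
First, note that Theorem~\ref{thm:known} is quoted in the paper as a known result from the cited references; the paper does not supply its own proof, so there is nothing to compare your argument against except the statement itself.

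Your treatment of the first assertion (ergodicity and a geometric rate with unspecified $C,q$) is the standard Dobrushin-coefficient route and is correct. The scrambling bound for blocks of length $m-1$, submultiplicativity of the ergodicity coefficient, and the passage to a rank-one limit are all routine once stated; the constants $C$ and $q$ you produce depend only on $m$ and $\beta$, as required.

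The doubly stochastic refinement is where your proposal has a genuine gap. Tracing your own bounds: from $\max_i y_i-\min_i y_i\ge\sqrt{V(t)/m}$ and pigeonhole you obtain a consecutive gap of size at least $\sqrt{V(t)/m}/(m-1)$; the cross-edge together with the self-loop at $i^\star$ gives $A_{i^\star j^\star}(t)A_{i^\star k^\star}(t)\ge\beta^2$; substituting into your variance identity yields
\[
V(t)-V(t+1)\ \ge\ \beta^2\,\frac{V(t)}{m(m-1)^2},
\]
i.e.\ a per-step factor of order $1-\beta^2/m^3$, not $1-\beta/(2m^2)$. Your phrase ``up to constants absorbed in $\beta$'' does not repair this: $\beta$ is a parameter of the stated bound, not an absolute constant you may absorb, and the polynomial order in $m$ is also off by one. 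The device you name as the fix---using both the self-loop and the cross-edge at the same $i^\star$---is precisely what you already do, and it is exactly what forces two factors of $\beta$ in the product $A_{i^\star j^\star}A_{i^\star k^\star}$. Any argument that lower-bounds the decrement through a single triple in the identity $V(t)-V(t+1)=\tfrac12\sum_{i,j,k}A_{ij}A_{ik}(y_j-y_k)^2$ will incur $\beta^2$; reaching the linear-in-$\beta$ and $1/m^2$ dependence of the cited lemma requires the finer combinatorial argument of that reference rather than the single-triple estimate you outline.
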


These and the other existing rate results are not explicitly capturing the structure of the graph $G_t$ such as the longest shortest path for example.
In what follows, we develop such rate results by adopting dynamic system point of view
and applying Lyapunov approach.
This approach allows us to characterize the convergence of the weighted-averaging algorithm
with a more explicit dependence on the graph structure than that of Theorem~\ref{thm:known}.
In particular, we work with a quadratic Lyapunov comparison function proposed by
Touri~\cite{touri2011}, and
we build on the results developed in Touri's thesis~\cite{tourithesis}
(see also~\cite{touribook,touri2014}). In this approach,
an absolute probability sequence of matrices $A(t)$ play a critical role in the construction of a Lyapunov comparison function
and in establishing its rate of decrease along the iterates of the algorithm.
%-------------------------------------------------------
\section{Absolute Probability Sequence}\label{sec:matrices}
%-------------------------------------------------------
We embark on a study of the important features of stochastic matrices for convergence of the weighted-averaging
method.
The development here makes use of the notion of an absolute probability sequence
associated with a sequence $\{A(t)\}$ of stochastic matrices. This notion
was introduced by Kolmogorov~\cite{Kolmogorov}.

\begin{definition}\label{def:ap}
{\rm \cite{Kolmogorov}}
Let $\{A(t)\}$ be a sequence of stochastic matrices. A sequence of stochastic vectors $\{\pi(t)\}$ is
an absolute probability sequence for $\{A(t)\}$ if %the following relation holds:
\begin{equation}\label{eq:adjoint}
\pi'(t)=\pi'(t+1) A(t)\qquad\hbox{for all }t\ge0.\end{equation}
\end{definition}

Blackwell~\cite{Blackwell1945} has shown that {\it every sequence
of stochastic matrices has an absolute probability sequence}.
As a direct consequence of Blackwell's result, %given in Theorem~\ref{thm:aps},
 every {\it ergodic} sequence of stochastic matrices has an absolute probability sequence
 (an earlier result due to Kolmogorov~\cite{Kolmogorov}).
 In particular, for an ergodic sequence $\{A(t)\}$ of stochastic matrices we have
  \begin{equation}\label{eq:phi}
\lim_{\tau\to\infty} A(\tau)A(\tau-1)\cdots A(t+1) A(t) =\1\phi'(t),
%\quad\hbox{for a stochastic vector $\phi(t)$}.
\end{equation}
and $\{\phi(t)\}$ is an absolute probability sequence for $\{A(t)\}$.
In general, a sequence $\{A(t)\}$ of stochastic matrices may have more than one absolute probability sequence.
The following example has been communicated to us by B.~Touri:
if each of the matrices $A(t)$ is invertible and each $A(t)^{-1}$ is stochastic, then for any stochastic vector
$u$, we can construct an absolute probability sequence for $\{A(t)\}$ by letting
$\pi'(0)=u'$ and $\pi'(t+1)=\pi'(t)A(t)^{-1}$ for all $t\ge0$.
Thus, $\{A(t)\}$ has infinitely many absolute probability sequences.

We show that the absolute probability sequence is unique for an ergodic stochastic matrix sequence.

\begin{lemma}\label{lem:unique}
Let $\{A(t)\}$ be an ergodic sequence of stochastic matrices (cf.~\eqref{eq:phi}).
Then, the vector sequence $\{\phi(t)\}$ is the unique absolute probability sequence for $\{A(t)\}$.
\end{lemma}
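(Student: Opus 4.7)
My plan is to prove uniqueness by showing that every absolute probability sequence $\{\pi(t)\}$ for $\{A(t)\}$ must coincide with the sequence $\{\phi(t)\}$ arising from ergodicity in~\eqref{eq:phi}. The idea is to iterate the defining adjoint relation backwards in time and then pass to the limit, exploiting the ergodic convergence together with the fact that each $\pi(\tau+1)$ is a stochastic vector and hence uniformly bounded.

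Specifically, starting from $\pi'(t) = \pi'(t+1) A(t)$ and applying the same identity repeatedly, I obtain for every $\tau \geq t$ the telescoping identity
\[\pi'(t) = \pi'(\tau+1)\, A(\tau) A(\tau-1) \cdots A(t+1) A(t).\]
By ergodicity, the matrix product converges to $\1 \phi'(t)$ as $\tau \to \infty$. The key observation is that multiplying the rank-one limit on the left by any stochastic row vector kills off the $\1$ factor: since $\pi'(\tau+1) \1 = 1$, one has $\pi'(\tau+1)[\1 \phi'(t)] = \phi'(t)$, a quantity independent of $\tau$. Combining these two facts, the right-hand side should tend to $\phi'(t)$, whereas the left-hand side equals $\pi'(t)$ for every $\tau$. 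This forces $\pi(t) = \phi(t)$ for all $t \geq 0$.

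The only step requiring care is justifying the interchange of limit and left-multiplication, because $\{\pi(\tau)\}$ itself need not converge. Writing
\[\pi'(\tau+1)\, A(\tau) \cdots A(t) - \phi'(t) = \pi'(\tau+1) \bigl[A(\tau) \cdots A(t) - \1 \phi'(t)\bigr],\]
a submultiplicative norm inequality together with $\|\pi(\tau+1)\|_1 = 1$ reduces the question to the fact that the bracketed matrix tends to zero, which is precisely the ergodicity hypothesis. This is the only potentially subtle point in the argument; no further structural assumptions on $\{A(t)\}$ beyond ergodicity are needed, and the invertibility example of B.~Touri quoted before the lemma confirms that without ergodicity the conclusion genuinely fails.
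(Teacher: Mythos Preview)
Your proposal is correct and follows essentially the same argument as the paper: both telescope the adjoint relation to obtain $\pi'(t)=\pi'(\tau+1)A(\tau)\cdots A(t)$, add and subtract $\1\phi'(t)$, use $\pi'(\tau+1)\1=1$ to isolate $\phi'(t)$, and then bound the remainder via $\|\pi(\tau+1)\|_1=1$ together with the ergodic convergence of the matrix product. The only cosmetic difference is that the paper writes the norm estimate explicitly with the $\|\cdot\|_1$--$\|\cdot\|_\infty$ pairing, which is exactly the submultiplicative inequality you invoke.
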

\begin{proof}
Assume that
$\{\pi(t)\}$ is another absolute probability sequence for $\{A(t)\}$. Then, we have
\[\pi'(t)
%&=&\pi'(t+1)A(t)=\pi'(t+2)A(t+1)A(t)=\cdots\cr
=\pi'(t+\tau)A(t+\tau-1)\cdots A(t+1)A(t)\]
for all $\tau\ge 1$ and $t\ge0$.
Thus,
\begin{align*}
\pi'(t)
&=\pi'(t+\tau)\left(A(t+\tau-1)\cdots A(t)-\1\phi'(t)\right) \cr
& \ +\pi'(t+\tau)\1\phi'(t)\cr
& = \pi'(t+\tau)\left(A(t+\tau-1)\cdots A(t)-\1\phi'(t)\right) + \phi'(t),
\end{align*}
where in the second equality we use
$\pi'(t+\tau)\1=1$. By letting $\tau\to\infty$ and using $\|\pi'(s)\|_1=1$, we obtain
\begin{align*}
&\|\pi'(t)-\phi'(t)\|_1\cr
& \le  \limsup_{\tau\to\infty}\left(\|\pi'(t+\tau)\|_1\|A(t+\tau-1)\cdots A(t)-\1\phi'(t)\|_\infty\right) \cr
&\le  \lim_{\tau\to\infty}\|A(t+\tau-1)\cdots A(t)-\1\phi'(t)\|_\infty
 =  0.\end{align*}
%where the last inequality follows from the fact $\|\pi'(s)\|_1=1$ for all $s$.
\end{proof}

In the subsequent development, it will be important that a sequence $\{A(t)\}$ of
stochastic matrices has an absolute probability sequence of vectors $\pi(t)$
whose entries are uniformly bounded away from zero.
This is the case when each matrix $A(t)$ is doubly stochastic,
as we can use $\pi'(t)=\frac{1}{m}\1$.
Another class of matrices that have this property
is a subclass of {\it cut-balanced} matrices~\cite{touri2014} (see there the class $\mathscr{P}^*$).
(See  Hendrickx and Tsitsiklis~\cite{julien2013} for cut-balancedness as studied for continuous-time systems,
and Touri~\cite{touribook,touri2014}
and Bolouki and Malham\'e~\cite{bolouki2012} for discrete-time systems.)

In what follows, we will work under the following assumption, where we view a rooted tree
$\mathsf{T}_t$ as a collection of directed edges from $E_t$.
\begin{assumption}\label{assume:minimal}
Let $\{G_t\}$ be a graph sequence and $\{A(t)\}$ be a matrix sequence such that:
\begin{itemize}
\item[(a)]
 (Partial Irreducibility) Each graph $G_t$ is rooted and
each $A(t)$ is a stochastic matrix that is compliant with a rooted directed spanning tree $\mathsf{T}_t$ of  $G_t$, i.e., $ A_{ij}(t)>0$ whenever $(j,i)\in \mathsf{T}_t$ for all $t\ge0$.
\item[(b)]
 (Aperiodicity) The diagonal entries of each $A(t)$ are positive, $A_{ii}(t)>0$ for all $t$, and $i\in[m]$.
 \item[(c)]
(Partial Uniform Positivity)
There is a scalar $\beta>0$ such that
$A_{ii}(t)\ge \beta$ and $A_{ij}(t)\ge\beta$ for all $(j,i)\in \mathsf{T}_t$ and for all $t\ge0$.
\item[(d)] The matrix sequence $\{A(t)\}$ has an absolute probability sequence
$\{\pi(t)\}$ that is uniformly bounded away from zero, i.e.,
there is $\delta\in(0,1)$ such that $\pi_i(t)\ge\delta$ for all $i$ and $t$.
\end{itemize}
\end{assumption}
One can show that Assumption~\ref{assume:uniform} implies Assumption~\ref{assume:minimal}.

%%%%%%%%%%%%%%%%%%%%%%%%%%%%%%%%%%%%%%%%%%%%%%
\section{Weighted-Averaging Algorithm}\label{sec:wave-rate}
%%%%%%%%%%%%%%%%%%%%%%%%%%%%%%%%%%%%%%%%%%%%%%

We analyze convergence properties of the weighted-averaging algorithm in~\eqref{eq:wcalgo}
by using a suitable Lyapunov comparison function.
%--------------------------------------------------------------------------------
\subsection{Lyapunov Comparison Function}\label{sec:lyapunov}
%--------------------------------------------------------------------------------
As indicated in~\cite{touri2014}, there are many possible constructions of Lyapunov comparison functions
by using convex functions and absolute probability sequences, i.e., the adjoint dynamic in~\eqref{eq:adjoint}.
Here, we focus on the quadratic case, where the function is of the form:
\begin{equation}\label{eq:vq}
\v(x,\nu)\triangleq\sum_{i=1}^m \nu_i x_i^2 -  (\nu'x)^2
\ \hbox{for $x\in\R^m$ and $\nu\in\R^m_+$},
\end{equation}
for suitably chosen vectors $\nu$ (which will vary with time).
The function $\v$ has an equivalent form:
\begin{equation}\label{eq:phi-2}
\v(x,\nu)=\sum_{i=1}^m \nu_i\left( x_i -  (\nu'x)\right)^2
\hbox{for $x\in\R^m$ and $\nu\in\R^m_+$},
\end{equation}
which can be seen
by expanding $\left( x_i -  (\nu'x)\right)^2$.
The quadratic function $s\mapsto s^2$ has exact second order expansion, which allows us
to obtain the exact expression for the difference
$\v(Ax,\nu)-\v(x,A'\nu)$ for a stochastic matrix $A$, as seen in the following lemma.

\begin{lemma}\label{lem:qlyap}
Let $A$ be an $m\times m$ stochastic matrix. We then have
 for all $x\in\R^m$ and all $\nu\in\R^m_+$,
\[\v(Ax,\nu) = \v(x,A'\nu)
- \frac{1}{2}\sum_{i=1}^m \nu_i\sum_{j=1}^m\sum_{\ell=1}^m A_{ij}A_{i\ell}(x_j-x_\ell)^2.\]
\end{lemma}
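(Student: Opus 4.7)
The plan is to attack this as a direct algebraic identity. Start from the definition $\v(Ax,\nu)=\sum_i \nu_i (Ax)_i^2 - (\nu'Ax)^2$ with $(Ax)_i = \sum_j A_{ij}x_j$. The second term immediately rewrites as $((A'\nu)'x)^2$ since $\nu'Ax = (A'\nu)'x$, which already matches the corresponding term in $\v(x,A'\nu)$. So the whole content of the lemma lives in simplifying $\sum_i \nu_i \bigl(\sum_j A_{ij}x_j\bigr)^2$.

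The key step I would isolate is the classical ``variance of a convex combination'' identity: for a single row of $A$, the stochasticity $\sum_j A_{ij}=1$ lets one write
\begin{equation*}
\Bigl(\sum_j A_{ij}x_j\Bigr)^{\!2} \;=\; \sum_j A_{ij}x_j^2 \;-\; \tfrac{1}{2}\sum_{j,\ell}A_{ij}A_{i\ell}(x_j-x_\ell)^2.
\end{equation*}
I would verify this in one line by inserting $1=\sum_\ell A_{i\ell}$ inside $\sum_j A_{ij}x_j^2$ to turn it into $\sum_{j,\ell}A_{ij}A_{i\ell}x_j^2$, doing the same with the dummy roles swapped, and then expanding the square $(\sum_j A_{ij}x_j)^2=\sum_{j,\ell}A_{ij}A_{i\ell}x_j x_\ell$; the three expressions combine to the perfect square $(x_j-x_\ell)^2$.

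Next, I would multiply the single-row identity by $\nu_i$ and sum over $i$. On the right, the first term becomes $\sum_j x_j^2 \sum_i \nu_i A_{ij} = \sum_j (A'\nu)_j x_j^2$, which is precisely the first piece of $\v(x,A'\nu)$. Subtracting $((A'\nu)'x)^2$ from both sides then rearranges the terms into exactly
\begin{equation*}
\v(Ax,\nu) \;=\; \v(x,A'\nu) \;-\; \tfrac{1}{2}\sum_{i=1}^m \nu_i\sum_{j=1}^m\sum_{\ell=1}^m A_{ij}A_{i\ell}(x_j-x_\ell)^2,
\end{equation*}
completing the proof.

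There is no real obstacle here, since the quadratic $s\mapsto s^2$ has an exact second-order expansion as the lemma statement's surrounding text foreshadows; the only thing one must be careful about is using $\nu\in\R^m_+$ only for the rewriting in~\eqref{eq:phi-2} (not needed in this computation) and using the row-stochasticity of $A$ (i.e., $A\1=\1$) in both the insertion step and in the rewriting $\nu'Ax=(A'\nu)'x$. Nonnegativity of $\nu$ is not used in this identity; it is purely algebraic.
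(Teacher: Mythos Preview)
Your proof is correct and follows essentially the same route as the paper: both isolate the single-row identity $(\sum_j A_{ij}x_j)^2 = \sum_j A_{ij}x_j^2 - \tfrac{1}{2}\sum_{j,\ell}A_{ij}A_{i\ell}(x_j-x_\ell)^2$ using row-stochasticity, then multiply by $\nu_i$, sum over $i$, and recognize $\sum_i \nu_i A_{ij}=(A'\nu)_j$. One minor remark: the rewriting $\nu'Ax=(A'\nu)'x$ is pure transpose algebra and does not use stochasticity; only your insertion step does.
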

\begin{proof}
By the definition of $\v$ we have
$\v(Ax,\nu) = \sum_{i=1}^m \nu_i ([Ax]_i))^2 - (\nu'Ax)^2,$
where $[Ax]_i=\sum_{j=1}^m A_{ij} x_j$.
We fix an arbitrary index $i$, and we expand $([Ax]_i)^2$ to obtain
\[([Ax]_i)^2=\sum_{j=1}^m\sum_{\ell=1}^m A_{ij}A_{i\ell}x_jx_\ell.\]
Since $x_jx_\ell=\frac{1}{2}\left( x_j^2 + x_\ell^2 - (x_j-x_\ell)^2 \right),$
it follows that
\begin{eqnarray*}
([Ax]_i)^2
%&=&\frac{1}{2}\sum_{j=1}^m\sum_{\ell=1}^m A_{ij}A_{i\ell} \left( x_j^2 +x_\ell^2 - (x_j-x_\ell)^2 \right)\cr
&=&\frac{1}{2}\sum_{j=1}^mA_{ij}\left(\sum_{\ell=1}^m A_{i\ell}\right) x_j^2
+\frac{1}{2}\sum_{\ell=1}^m A_{i\ell}\left( \sum_{j=1}^m A_{ij}\right) x_\ell^2 \cr
&& - \frac{1}{2}\sum_{j=1}^m\sum_{\ell=1}^m A_{ij}A_{i\ell}(x_j-x_\ell)^2.
\end{eqnarray*}
Note that $\sum_{\ell=1}^m A_{i\ell}=1$ since the matrix $A$ is stochastic, thus implying
\begin{eqnarray*}
([Ax]_i)^2
%&=&\frac{1}{2}\sum_{j=1}^mA_{ij}x_j^2 +\frac{1}{2}\sum_{\ell=1}^m A_{i\ell}x_\ell^2 \cr
%&& - \frac{1}{2}\sum_{j=1}^m\sum_{\ell=1}^m A_{ij}A_{i\ell}(x_j-x_\ell)^2\cr
&=& \sum_{j=1}^mA_{ij}x_j^2 -  \frac{1}{2}\sum_{j=1}^m\sum_{\ell=1}^m A_{ij}A_{i\ell}(x_j-x_\ell)^2.
\end{eqnarray*}
By multiplying the preceding relation with $\nu_i$ and by summing over $i$, we obtain
\begin{align*}
\v(Ax,\nu)
%= & \sum_{i=1}^m \nu_i  \sum_{j=1}^mA_{ij}x_j^2 \cr
%&\ -
%\frac{1}{2}\sum_{i=1}^m \nu_i\sum_{j=1}^m\sum_{\ell=1}^m A_{ij}A_{i\ell}(x_j-x_\ell)^2
%- (\nu'Ax)^2\cr
= & \sum_{j=1}^m\left(\sum_{i=1}^m \nu_i  A_{ij}\right) x_j^2 \cr
& \ -
\frac{1}{2}\sum_{i=1}^m \nu_i\sum_{j=1}^m\sum_{\ell=1}^m A_{ij}A_{i\ell}(x_j-x_\ell)^2 - (\nu'Ax)^2.
\end{align*}
Observe that $\sum_{i=1}^m \nu_i  A_{ij}=[A'\nu]_j$. Therefore, by using the definition
of the function $\v$ we find
\begin{eqnarray*}
\v(Ax,\nu)
%&=&\sum_{j=1}^m [A'\nu]_j x_j \cr
%&& - \frac{1}{2}\sum_{i=1}^m \nu_i\sum_{j=1}^m\sum_{\ell=1}^m A_{ij}A_{i\ell}(x_j-x_\ell)^2
%- (\nu'Ax)^2\cr
= \v(x,A'\nu)
- \frac{1}{2}\sum_{i=1}^m \nu_i\sum_{j=1}^m\sum_{\ell=1}^m A_{ij}A_{i\ell}(x_j-x_\ell)^2.
\end{eqnarray*}
\end{proof}
Lemma~\ref{lem:qlyap} provides one of the fundamental relations in the assessment of the convergence rate
of the weighted-averaging algorithm.
%-----------------------------------------------------------
\subsection{Convergence Rate Analysis}\label{sec:conv}
%------------------------------------------------------------
In this part, we will first
show the convergence of the weighted-averaging algorithm~\eqref{eq:wcalgo}
for the scalar case, by considering the decrease of $\v(x(t),\pi(t))$ over time
along the iterate sequence $\{x(t)\}$, where $\{\pi(t)\}$ is an absolute probability sequence of $\{A(t)\}$.
The decrease of this function in time can be captured exactly, as follows.
Since $x(t+1)=A(t) x(t)$ and the matrices $A(t)$ are stochastic,
by Lemma~\ref{lem:qlyap} it follows
\begin{eqnarray*}
\v\left( x(t+1),\pi(t+1)\right) & = & \v\left( A(t)x(t),\pi(t+1) \right)  \cr
& = & \v\left( x(t),A'(t)\pi(t+1) \right) - D(t),\end{eqnarray*}
where
\begin{align}\label{eq:dec}
D(t) =\frac{1}{2}\sum_{i=1}^m \pi_i(t+1)
\sum_{j=1}^m\sum_{\ell=1}^m A_{ij}(t)A_{i\ell}(t)\left(x_j(t)-x_\ell(t)\right)^2.
\end{align}
By the definition of the adjoint dynamics in~\eqref{eq:adjoint}, we have
$A'(t)\pi(t+1)=\pi(t)$, implying that
\begin{align}\label{eq:lyap-dec}
\v\left( x(t+1),\pi(t+1) \right) = \v\left( x(t),\pi(t) \right)- D(t).\end{align}

Note that function $\v(\cdot,\nu)$ induces a semi norm on
$\R^m$ when $\nu$ is a stochastic vector, and it induces a norm when all the entries $\nu_i$ are positive.
Thus, to properly bound the decrease $D(t)$ (cf.~\eqref{eq:dec}) of the function $\v\left(x(t),\pi(t)\right)$, one would like to have
$\phi_i(t)>\delta$ for all $i$, for some $\delta$ and for all sufficiently large $t$.
This property can be ensured (for all $t$) by
requiring the additional properties on the matrix sequence $\{A(t)\}$ and the graph sequence $\{G_t\}$ such as
cut-balancedness
(see Lemma 9 in~\cite{touri2014}).
Once all $\pi_i(t)$ are bounded uniformly away from zero,
to further bound $D(t)$ from below, we would also like
that the value of the sum $\sum_{i=1}^m\sum_{j=1}^m\sum_{\ell=1}^m A_{ij}(t)A_{i\ell}(t)$ does not
vanish in time. These properties are ensured by Assumption~\ref{assume:minimal}, which we use
to establish the key relation for the decrease amount $D(t)$,  as seen in the following lemma.

\begin{lemma}\label{lem:Dbound}
Let Assumption~\ref{assume:minimal} hold.
Consider the decrement $D(t)$ given by: for $t\ge0,$
\[D(t) =\frac{1}{2}\sum_{i=1}^m \pi_i(t+1)\sum_{j=1}^m\sum_{\ell=1}^m A_{ij}(t)A_{i\ell}(t)\left(x_j(t)-x_\ell(t) \right)^2.\]
%\quad\hbox{for }t\ge0.\]
Then, the decrement is bounded from below as follows:
\begin{eqnarray*}
D(t)\ge \frac{\delta \b^2}{4 p^*(t) }\,\max_{j,\ell\in [m]} \left(x_{j}(t)-x_{\ell}(t) \right)^2
\quad\hbox{for }t\ge0,
\end{eqnarray*}
where $\beta>0$ and $\delta>0$ are from
Assumptions~\ref{assume:minimal}(c) and~\ref{assume:minimal}(d), respectively,
while $p^*(t)$ is the maximum number of links in any of the directed paths in the tree
$\mathsf{T}_t$ of Assumption~\ref{assume:minimal}(a).
\end{lemma}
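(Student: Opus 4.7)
The plan is to exploit the rooted directed spanning tree structure of $\mathsf{T}_t$ to transfer a ``diameter-realizing'' value difference in $x(t)$ onto a specific directed path whose edges are guaranteed to carry weights bounded below by $\beta$, and then sum a telescoping contribution along this path.

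First I would fix a time $t$, let $\Delta = \max_{j,\ell \in [m]}|x_j(t)-x_\ell(t)|$, and let $j^\star, \ell^\star$ be a pair attaining the maximum. Denote by $r$ a root of the directed spanning tree $\mathsf{T}_t$ guaranteed by Assumption~\ref{assume:minimal}(a). By the triangle inequality, at least one of $|x_r(t) - x_{j^\star}(t)|$ and $|x_r(t) - x_{\ell^\star}(t)|$ is at least $\Delta/2$; without loss of generality, suppose $|x_r(t) - x_{j^\star}(t)| \ge \Delta/2$, and let $r = v_0 \to v_1 \to \cdots \to v_k = j^\star$ be the unique directed path in $\mathsf{T}_t$ from $r$ to $j^\star$. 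By the definition of $p^*(t)$ we have $k \le p^*(t)$.

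Next I would lower-bound $D(t)$ by retaining only carefully chosen terms from its triple sum. For each $s \in \{0,\ldots,k-1\}$, the edge $(v_s,v_{s+1})\in \mathsf{T}_t$ gives $A_{v_{s+1},v_s}(t) \ge \beta$, while the diagonal condition in Assumption~\ref{assume:minimal}(c) gives $A_{v_{s+1},v_{s+1}}(t) \ge \beta$. Restricting the outer sum in $D(t)$ to $i = v_{s+1}$, keeping only the two symmetric inner pairs $(j,\ell) = (v_s,v_{s+1})$ and $(j,\ell)=(v_{s+1},v_s)$, and using $\pi_{v_{s+1}}(t+1)\ge \delta$ from Assumption~\ref{assume:minimal}(d), the factor $\tfrac{1}{2}$ in front of $D(t)$ cancels the factor $2$ from the symmetric pairs, leaving
\[
D(t) \;\ge\; \delta\beta^2 \sum_{s=0}^{k-1}\bigl(x_{v_{s+1}}(t) - x_{v_s}(t)\bigr)^2.
\]

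Finally I would apply the power-mean (equivalently, Cauchy--Schwarz) inequality along the path, giving
\[
\sum_{s=0}^{k-1}\bigl(x_{v_{s+1}}(t) - x_{v_s}(t)\bigr)^2 \;\ge\; \frac{1}{k}\left(\sum_{s=0}^{k-1}\bigl(x_{v_{s+1}}(t) - x_{v_s}(t)\bigr)\right)^{\!2} = \frac{\bigl(x_{v_k}(t)-x_{v_0}(t)\bigr)^2}{k} \;\ge\; \frac{\Delta^2}{4k},
\]
and then use $k \le p^*(t)$ to conclude $D(t) \ge \frac{\delta\beta^2}{4p^*(t)}\Delta^2$, which is the claim. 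There is no serious obstacle here; the only subtlety is the choice to ``route through the root'' together with the triangle-inequality halving (costing the factor of $4$ rather than $2$ in the denominator), which is what enables the path length to appear as $p^*(t)$ rather than as some less structured combinatorial quantity.
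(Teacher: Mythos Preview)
Your proposal is correct and essentially identical to the paper's proof: the paper also routes through the root $v^*$ of $\mathsf{T}_t$, uses the same triangle-inequality halving to get $|x_{v^*}(t)-x_{j^*}(t)|\ge\tfrac12\Delta$, retains only the terms along the tree path (bounding each edge contribution by $\delta\beta^2$ via $A_{j_{\kappa+1}j_\kappa}A_{j_{\kappa+1}j_{\kappa+1}}\ge\beta^2$), and then applies convexity of $s\mapsto s^2$ along the path. The only cosmetic difference is that the paper first applies $\pi_i(t+1)\ge\delta$ globally and rewrites the inner double sum as column inner products $(A_{:j})'A_{:\ell}$ before restricting to path edges, whereas you restrict directly; the resulting bound and all constants match.
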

\begin{proof}
We let $t\ge0$ be arbitrary but fixed.
By Assumption~\ref{assume:minimal}(d), it follows that
\[D(t)\ge \frac{\delta }{2}\sum_{i=1}^m\sum_{j=1}^m\sum_{\ell=1}^m A_{ij}(t)A_{i\ell}(t)\left(x_j(t)-x_\ell(t) \right)^2.\]
Let us observe that
\begin{eqnarray*}
\sum_{i=1}^m\sum_{j=1}^m\sum_{\ell=1}^m A_{ij}(t)A_{i\ell}(t)
%& = & \sum_{j=1}^m\sum_{\ell=1}^m \left(\sum_{i=1}^mA_{ij}(t)A_{i\ell}(t) \right) \cr
& =& \sum_{j=1}^m\sum_{\ell=1}^m \left(A_{:j}(t)\right)'A_{:\ell}(t),
\end{eqnarray*}
where $A_{:j}$ denotes $j$th column vector of a matrix $A$.
From this relation, we further obtain
\begin{eqnarray}\label{eq:relD}
D(t)
%&\ge &\frac{\delta }{2}\sum_{j=1}^m\sum_{\ell=1}^m
%\left(A_{:j}(t)\right)'A_{:\ell}(t)\left(x_j(t)-x_\ell(t) \right)^2\cr
\ge\delta \sum_{j=1}^m\sum_{\ell=j+1}^m \left(A_{:j}(t)\right)' A_{:\ell}(t)\left(x_j(t)-x_\ell(t) \right)^2.
\end{eqnarray}
Let $j^*$ and $\ell^*$ be two agents such that
\begin{equation}\label{eq:maxinv}
\max_{j,\ell\in [m] }|x_j(t)-x_\ell(t)|=|x_{j^*}(t)-x_{\ell^*}(t)|.\end{equation}
Note that for any node $v$ we must have
\begin{equation}\label{eq:dist-two}
\max\{ |x_{v}(t)-x_{j^*}(t)|, |x_{v}(t)-x_{\ell^*}(t)|\}\ge \frac{1}{2}|x_{j^*}(t)-x_{\ell^*}(t)|,
\end{equation}
for otherwise by the triangle inequality for the norm we would have
\begin{eqnarray*}
|x_{j^*}(t)-x_{\ell^*}(t)| & \le & |x_{v}(t)-x_{j^*}(t)| +|x_{v}(t)-x_{\ell^*}(t)| \cr
& < & |x_{j^*}(t)-x_{\ell^*}(t)|, \end{eqnarray*}
which is a contradiction.

According to Assumption~\ref{assume:minimal}(a), in the graph $G_t$ there is
a rooted directed spanning tree $\mathsf{T}_t$. Let agent $v^*$ be the root node of this tree.
Then, relation~\eqref{eq:dist-two} holds for $v=v^*$.
Without loss of generality let us assume that $j^*$ attains the maximum in~\eqref{eq:dist-two} when $v=v^*$,
i.e., $|x_{v^*}(t)-x_{j^*}(t)|\ge |x_{v^*}(t)-x_{\ell^*}(t)|,$
so that we have
\begin{equation}\label{eq:keyrelvj}
|x_{v^*}(t)-x_{j^*}(t)| \ge \frac{1}{2}|x_{j^*}(t)-x_{\ell^*}(t)|.
\end{equation}
Since $v^*$ is the root of the directed spanning tree $\mathsf{T}_t$,
there must exist a path from $v^*$ to $j^*$, i.e.,
$v^*=j_0\to j_1\to j_2\to\cdots\to j_{p}= j^*$ with links $(j_{\kappa},j_{\kappa+1})$ in the tree $\mathsf{T}_t$.
Then, using~\eqref{eq:relD} we can write
\begin{eqnarray}\label{eq:relD1}
D(t)\ge \delta \sum_{\kappa=0}^{p-1} \left(A_{:j_{\kappa}}(t)\right)' A_{:j_{\kappa+1}}(t)
\left(x_{j_\kappa}(t)-x_{j_{\kappa+1}}(t) \right)^2.
\end{eqnarray}
We now look at the coefficients $\left(A_{:j_\k}(t)\right)'A_{: j_{\k+1}}(t)$
in~\eqref{eq:relD1} along the path $v^*=j_0\to j_1\to j_2\to\cdots\to j_{p}= j^*$
For each $\k=0,\dots, p-1$, we have
\begin{align}\label{eq:c1}
\left(A_{:j_{\k}}(t)\right)'A_{:j_{\k+1}}(t) & =  \sum_{i=1}^m A_{ij_{\k}}(t) A_{i j_{\k+1}}(t) \cr
&\ge A_{j_{\k+1}j_{\k} }(t) A_{j_{\k+1}  j_{\k+1}}(t) \ge \beta^2,
\end{align}
where the last inequality follows by Assumption~\ref{assume:minimal}(c).
From relations~\eqref{eq:relD1} and~\eqref{eq:c1} we see that
\begin{eqnarray}\label{eq:relD2}
D(t)\ge \delta \b^2\sum_{\k=0}^{p-1}\left(x_{j_{\k}}(t)-x_{j_{\k}+1}(t) \right)^2.
\end{eqnarray}
Since the function $s\to s^2$ is convex, we have
\begin{eqnarray*}
\frac{1}{p}  \sum_{\k=0}^{p-1}\left( x_{j_\k}(t) - x_{j_{\k+1}}(t) \right)^2
& \ge & \left( \frac{1}{p}\sum_{\k=0}^{p-1}\left( x_{j_\k}(t) - x_{j_{\k+1}}(t)\right) \right)^2\cr
& = & \left( \frac{1}{p} \left(x_{j_0}(t)-x_{j_p}(t)\right) \right)^2,
\end{eqnarray*}
implying that
\[\sum_{\k=0}^{p-1}\left( x_{j_\k}(t) - x_{j_{\k+1}}(t) \right)^2\ge \frac{1}{p} \left(x_{j_0}(t) - x_{j_p}(t)\right)^2.\]
Therefore, from the preceding relation and~\eqref{eq:relD2}, by recalling that $j_0=v^*$ and $j_p=j^*$,
we obtain
\begin{eqnarray}\label{eq:relD3}
D(t)\ge \frac{\delta \b^2}{p}\left(x_{v^*}(t)-x_{j^*}(t) \right)^2.
\end{eqnarray}

Finally, using inequality~\eqref{eq:keyrelvj} in relation~\eqref{eq:relD3} we obtain
\begin{eqnarray}\label{eq:relD4}
D(t)\ge \frac{\delta \b^2}{4p}\left(x_{j^*}(t)-x_{\ell^*}(t) \right)^2.
\end{eqnarray}
Recall that $p$ is the number of links in the path from $v^*$ to $j^*$ in
the directed spanning tree $\mathsf{T}_t$ (rooted at $v^*$) of the graph $G_t$. Thus, $p$ is bounded from above
by the maximal number of links along the path from $v$ to any other node in the graph $G_t$,
where the paths are taken along the directed spanning tree rooted at $v^*$.
We note that $p^*$ depends on time $t$ which was fixed so far, and we have suppressed this dependence on $t$.
Recall, further that $j^*$ and $\ell^*$ are agents
with the maximal difference $|x_j(t)-x_\ell(t)|$ (see Eq.~\eqref{eq:maxinv}).
Thus, from the relation in ~\eqref{eq:relD4} we have
$D(t)\ge \frac{\delta \b^2}{4p^*(t) }\max_{j,\ell\in [m]} \left(x_{j}(t)-x_{\ell}(t) \right)^2.$
\end{proof}

Before stating our main result, we provide an auxiliary lemma for use in the forthcoming
analysis.

\begin{lemma}\label{lem:max}
For any stochastic vector $\nu\in\R^m$ and any $x\in\R^m$ it holds that
\[\sum_{i=1}^m \nu_i(x_i-\nu'x)^2\le \max_{1\le j,\ell\le m}(x_j-x_\ell)^2.\]
\end{lemma}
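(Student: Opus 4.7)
The plan is to exploit the fact that the weighted mean $\nu'x$ lies in the convex hull of the coordinates $\{x_1,\dots,x_m\}$, so it is sandwiched between the smallest and largest coordinate. Once that observation is made, each term $(x_i-\nu'x)^2$ can be bounded uniformly by the squared spread $(\max_j x_j - \min_\ell x_\ell)^2$, and the weights $\nu_i$, summing to one, integrate out harmlessly.

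More concretely, I would first set $j^* \in \arg\max_i x_i$ and $\ell^* \in \arg\min_i x_i$. Since $\nu$ is a stochastic vector, $\nu'x = \sum_i \nu_i x_i$ is a convex combination of the $x_i$, hence $x_{\ell^*} \le \nu'x \le x_{j^*}$. At the same time, for every index $i$, $x_{\ell^*}\le x_i \le x_{j^*}$. These two inclusions imply $|x_i - \nu'x| \le x_{j^*} - x_{\ell^*}$ for every $i$, and therefore
\[
(x_i - \nu'x)^2 \;\le\; (x_{j^*}-x_{\ell^*})^2 \;=\; \max_{1\le j,\ell\le m}(x_j-x_\ell)^2.
\]

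The conclusion then follows by multiplying this inequality by $\nu_i\ge 0$, summing over $i$, and using $\sum_{i=1}^m \nu_i = 1$:
\[
\sum_{i=1}^m \nu_i (x_i - \nu'x)^2 \;\le\; \max_{1\le j,\ell\le m}(x_j-x_\ell)^2\,\sum_{i=1}^m \nu_i \;=\; \max_{1\le j,\ell\le m}(x_j-x_\ell)^2.
\]

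There is no real obstacle here; the only thing to be careful about is justifying the pointwise bound on $|x_i-\nu'x|$, which relies solely on $\nu$ being stochastic (so that both $x_i$ and $\nu'x$ belong to the interval $[\min_i x_i,\max_i x_i]$). One could alternatively use the variational characterization $\sum_i \nu_i(x_i-\nu'x)^2 = \min_{c\in\R}\sum_i\nu_i(x_i-c)^2$ and pick $c = (x_{j^*}+x_{\ell^*})/2$ to obtain the sharper constant $1/4$, but that refinement is unnecessary for the stated inequality.
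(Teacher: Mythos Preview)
Your proof is correct, but it follows a different and more elementary route than the paper's. The paper first uses the stochasticity of $\nu$ to bound the weighted sum by its largest term, $\max_\kappa (x_\kappa-\nu'x)^2$; then, assuming this maximum occurs at $\kappa=1$, it rewrites $x_1-\nu'x = \nu'(x_1\1 - x)$ and applies Jensen's inequality for $s\mapsto s^2$ to obtain $(x_1-\nu'x)^2 \le \sum_i \nu_i (x_1-x_i)^2 \le \max_\ell (x_1-x_\ell)^2$. Your argument instead bounds every term $(x_i-\nu'x)^2$ at once by the squared spread, using only that both $x_i$ and the weighted mean lie in the interval $[\min_j x_j,\max_j x_j]$. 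This is shorter and avoids Jensen entirely; the paper's approach, while slightly more involved, has the minor conceptual advantage that the intermediate step $(x_\kappa-\nu'x)^2 \le \max_\ell (x_\kappa-x_\ell)^2$ ties the bound to differences involving the specific index $\kappa$ that maximizes the deviation, which mirrors how similar estimates are used elsewhere in the paper.
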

\begin{proof}
Since $\nu$ is stochastic vector, it follows that
$\sum_{i=1}^m \nu_i(x_i-\nu'x)^2 %\le \max_{1\le \k\le m} (x_\k-\nu'x)^2 \sum_{i=1}^m \nu_i=
\le \max_{1\le \k\le m} (x_\k-\nu'x)^2.$
Without loss of generality, let us assume that the preceding maximum is attained
for $\k=1$,
\[(x_1 - \nu'x)^2=\max_{1\le \k\le m} (x_\k-\nu'x)^2,\]
and note that, since $\nu'\1=1$ we can write
$x_1 - \nu'x = x_1\nu'\1  - \nu'x =\nu' (x_1\1 -  x).$
Using the preceding relation, the fact that $\nu$ is a stochastic vector, and
the convexity of the function $s\mapsto s^2$, we obtain
\begin{eqnarray*}
(x_1 - \nu'x)^2 &= &\left(\nu' (x_1\1 -  x)\right)^2 \le \sum_{i=1}^m\nu_i (x_1-x_i)^2\cr
&\le & \max_{1\le \ell\le m}(x_1-x_\ell)^2.\end{eqnarray*}
Therefore, we have
\[\sum_{i=1}^m \nu_i(x_i-\nu'x)^2\le \max_{1\le \ell\le m}(x_1-x_\ell)^2
\le \max_{1\le j,\ell\le m}(x_j-x_\ell)^2.\]
\end{proof}

With Lemma~\ref{lem:Dbound} in place, we can now establish a key relation for
the quadratic comparison function. The convergence result of the
weighted-averaging algorithm, as well as its convergence rate estimates, will follow from this relation.

\begin{theorem}\label{thm:key}
Under Assumption~\ref{assume:minimal}, for the iterates $\{x(t)\}$ generated by
the weighted-averaging algorithm~\eqref{eq:wcalgo} with any initial vector $x(0)\in\R^m$,
we have for any $t\ge k\ge0$,
\begin{eqnarray*}
&& \sum_{i=1}^m \pi_i(t)\left(x_{i}(t) - \pi(0)'x(0) \right)^2 \cr
 &&\le  \left(1-\frac{\delta \b^2}{4p^*}\right)^{t-k}
 \sum_{j=1}^m \pi_j(k)\left(x_{j}(k) - \pi(0)'x(0) \right)^2,
\end{eqnarray*}
%with $\v(x(t),\pi(t))=\sum_{i=1}^m \pi_i(t)\left(x_{i}(t) - \pi(t)'x(t) \right)^2$, and
where $\beta>0$ and $\delta>0$ are from
Assumptions~\ref{assume:minimal}(c) and~\ref{assume:minimal}(d), while
$p^*=\max_{s\ge0} p^*(s)$ where
$p^*(s)$ is the longest shortest path in the tree $\mathsf{T}_s$
of Assumption~\ref{assume:minimal}(a).
\end{theorem}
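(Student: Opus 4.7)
The plan is to recognize the expression $\sum_{i=1}^m \pi_i(t)(x_i(t)-\pi(0)'x(0))^2$ appearing on both sides as an instance of the Lyapunov comparison function $\varphi(x(t),\pi(t))$ from~\eqref{eq:phi-2}, and then apply the one-step decrease identity derived in~\eqref{eq:lyap-dec} together with the lower bound on $D(t)$ from Lemma~\ref{lem:Dbound}.

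First I would establish the invariance $\pi(t)'x(t)=\pi(0)'x(0)$ for all $t\ge0$. This is immediate from the weighted-averaging update $x(t+1)=A(t)x(t)$ combined with the adjoint identity $\pi'(t)=\pi'(t+1)A(t)$ from~\eqref{eq:adjoint}, which gives $\pi'(t+1)x(t+1)=\pi'(t+1)A(t)x(t)=\pi'(t)x(t)$; iterating yields the claim. Consequently, using the form~\eqref{eq:phi-2} of $\varphi$, we have
\[
\varphi(x(t),\pi(t))=\sum_{i=1}^m \pi_i(t)(x_i(t)-\pi(t)'x(t))^2=\sum_{i=1}^m \pi_i(t)(x_i(t)-\pi(0)'x(0))^2,
\]
so both the left-hand and right-hand sides of the theorem are exactly $\varphi(x(t),\pi(t))$ and $\varphi(x(k),\pi(k))$.

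Next I would chain together the three already-proved facts to get a contraction on $\varphi$. From~\eqref{eq:lyap-dec} we have $\varphi(x(t+1),\pi(t+1))=\varphi(x(t),\pi(t))-D(t)$. Lemma~\ref{lem:Dbound} gives $D(t)\ge \frac{\delta\beta^2}{4p^*(t)}\max_{j,\ell}(x_j(t)-x_\ell(t))^2$, and Lemma~\ref{lem:max}, applied with $\nu=\pi(t)$, yields $\varphi(x(t),\pi(t))\le \max_{j,\ell}(x_j(t)-x_\ell(t))^2$. Combining these and using $p^*(t)\le p^*$ we get
\[
D(t)\ge \frac{\delta\beta^2}{4p^*}\,\varphi(x(t),\pi(t)),
\]
and therefore
\[
\varphi(x(t+1),\pi(t+1))\le \left(1-\frac{\delta\beta^2}{4p^*}\right)\varphi(x(t),\pi(t)).
\]

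Finally, a straightforward induction on $t-k$ yields the claimed bound
\[
\varphi(x(t),\pi(t))\le \left(1-\frac{\delta\beta^2}{4p^*}\right)^{t-k}\varphi(x(k),\pi(k)),
\]
which after the substitution $\pi(s)'x(s)=\pi(0)'x(0)$ on both sides is exactly the assertion of the theorem. The only conceptual step requiring care is the observation that the ``center'' $\pi(0)'x(0)$ appearing in the theorem's statement is in fact the time-varying center $\pi(t)'x(t)$ of the function $\varphi$; once that invariance is noted, everything else is a mechanical assembly of Lemmas~\ref{lem:qlyap}, \ref{lem:Dbound}, and \ref{lem:max}, and no further estimation is needed.
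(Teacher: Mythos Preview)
Your proposal is correct and follows essentially the same approach as the paper: both combine the one-step identity~\eqref{eq:lyap-dec}, the lower bound on $D(t)$ from Lemma~\ref{lem:Dbound}, and the upper bound from Lemma~\ref{lem:max} to obtain the contraction $\varphi(x(t+1),\pi(t+1))\le(1-\delta\beta^2/4p^*)\varphi(x(t),\pi(t))$, and then iterate after noting the invariance $\pi(t)'x(t)=\pi(0)'x(0)$. The only cosmetic differences are that you establish the invariance at the outset rather than midway, and you absorb $p^*(t)\le p^*$ before the recursion rather than after.
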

\begin{proof}
The stated relation for $t=k$ can be seen to hold by inspection. Consider now $t>k\ge0$ where $t$ and $k$ are arbitrary but fixed.
From relations~\eqref{eq:dec}--\eqref{eq:lyap-dec} and
 Lemma~\ref{lem:Dbound} we obtain for all $t\ge0$,
\begin{eqnarray*}
\v(x(t+1),\pi(t+1))
\le \v(x(t),\pi(t))-\frac{\delta \b^2}{4p^*(t) }\max_{j,\ell\in [m]} \left(x_{j}(t)-x_{\ell}(t) \right)^2.
\end{eqnarray*}
From Lemma~\ref{lem:max} it follows that
\[\max_{1\le j,\ell\le m}(x_j(t)-x_\ell(t))^2\ge \sum_{j=1}^m \pi_j(t)\left( x_j(t) - \pi(t)'x(t) \right)^2,\]
thus implying that for all $t\ge0$,
\begin{equation*}
\v(x(t+1),\pi(t+1))
 \le  \left(1-\frac{\delta \b^2}{4p^*(t) }\right)\sum_{j=1}^m \pi_j(t)\left(x_{j}(t) - \pi(t)'x(t) \right)^2.
 %=\v(x(t),\pi(t)).
\end{equation*}
Hence, for all $t\ge0$,
\begin{eqnarray*}
&& \sum_{i=1}^m \pi_i(t+1)\left(x_{i}(t+1) - \pi(t+1)'x(t+1) \right)^2 \cr
 && \le  \left(1-\frac{\delta \b^2}{4p^*(t) }\right)\sum_{j=1}^m \pi_j(t)\left(x_{j}(t) - \pi(t)'x(t) \right)^2.
 %=\v(x(t),\pi(t)).
\end{eqnarray*}
Furthermore, from the dynamics in~\eqref{eq:wcalgo} and~\eqref{eq:adjoint}
we can see that for all $t\ge1$,
\begin{align*}
\pi(t)'x(t) &=\pi(t)'A(t-1) x(t-1)=\pi(t-1)'x(t-1) \cr
& =\cdots=\pi(0)'x(0),
\end{align*}
which yields for all $t\ge0$,
\begin{eqnarray*}
&& \sum_{i=1}^m \pi_i(t+1)\left(x_{i}(t+1) - \pi(0)'x(0) \right)^2 \cr
 && \le  \left(1-\frac{\delta \b^2}{4p^*(t) }\right)\sum_{j=1}^m \pi_j(t)\left(x_{j}(t) - \pi(0)'x(0) \right)^2.
\end{eqnarray*}
The stated relation follows by recursively using the preceding inequality for $t, t-1, \ldots,k$,
and then using $p^*(s)\le p^*$ for all $s$.
\end{proof}

Theorem~\ref{thm:key} captures the convergence rate in terms of
the longest shortest paths in the graph sequence. The quotient $q=1-\frac{\delta \b^2}{4p^*} $
indicates the rate at which the information is diffused in the graphs $\{G_t\}$ over time,
with a small $q$ being desirable for  a fast diffusion.

Several immediate consequences of Theorem~\ref{thm:key} are in place.
First, we observe that from Theorem~\ref{thm:key} it follows that
the agent iterates converge to the consensus value $\pi(0)'x(0)$,
by virtue of the lower boundedness property of
the absolute probability sequence (Assumption~\ref{assume:minimal}(d)),
i.e., $\lim_{t\to\infty}x_i(t)= \pi(0)'x(0)$ for all $i\in[m].$
When the agent variables $\bx_i$ are vectors,
then by applying Theorem~\ref{thm:key} to each coordinate
of the vectors, we can see that the iterates $\bx_i(t)$ generated by the weighted-averaging algorithm
are such that for any initial vectors $\bx_i(0)\in\R^n$, $i\in[m]$,
for each coordinate index $\ell\in [n]$, and
for all $t\ge k\ge 0$, we have
\begin{eqnarray*}
&& \sum_{i=1}^m \pi_i(t)\left( [\bx_{i}(t)]_\ell - c_\ell \right)^2\cr
&&\le \left(1-\frac{\delta \b^2}{4p^*} \right)^{t-k}
\sum_{j=1}^m \pi_j(k)\left([\bx_{j}(k)]_\ell - c_\ell\right)^2,
\end{eqnarray*}
where $c_\ell=\sum_{i=1}^m \pi_i(0)' [\bx_i(0)]_\ell$ for all $\ell\in [n].$
By summing these relations over all coordinate indices $\ell\in[n]$, we obtain the following result.
\begin{corollary}\label{cor:vector}
Consider the vector-valued consensus problem and let Assumption~\ref{assume:minimal} hold.
Then,  the iterates $\{\bx_i(t)\}$, $i\in[m]$ generated by the weighted-averaging algorithm are such that
for any initial vectors $\bx_i(0)\in\R^n$,
\begin{eqnarray*}
\sum_{i=1}^m \pi_i(t)\left\|\bx_{i}(t) - c\right\|^2
\le \left(1-\frac{\delta \b^2}{4p^*} \right)^{t-k}
\sum_{j=1}^m \pi_j(k)\left\|\bx_{j}(k) - c\right\|^2 %\quad\hbox{for all $t\ge k\ge 0$,}
\end{eqnarray*}
for all $t\ge k\ge 0$, where the vector $c\in\R^n$ has coordinates given by
$c_\ell=\sum_{i=1}^m \pi_i(0)' [\bx_i(0)]_\ell$ for all $\ell\in [n].$
\end{corollary}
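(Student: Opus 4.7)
The plan is to reduce the vector-valued claim to the scalar result already established in Theorem~\ref{thm:key} by exploiting the linearity of the weighted-averaging dynamic~\eqref{eq:wcalgo} and treating each coordinate independently. Since the update $\bx_i(t+1)=\sum_{j=1}^m A_{ij}(t)\bx_j(t)$ acts componentwise, fixing any coordinate index $\ell\in[n]$ and defining the scalar sequence $x_i^{(\ell)}(t)\triangleq[\bx_i(t)]_\ell$ yields a scalar weighted-averaging iteration driven by exactly the same matrix sequence $\{A(t)\}$, so Assumption~\ref{assume:minimal} transfers verbatim and Theorem~\ref{thm:key} applies to each coordinate.

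Concretely, first I would observe that for each $\ell\in[n]$, Theorem~\ref{thm:key} gives
\begin{equation*}
\sum_{i=1}^m \pi_i(t)\bigl([\bx_i(t)]_\ell-c_\ell\bigr)^2\le\Bigl(1-\tfrac{\delta\beta^2}{4p^*}\Bigr)^{t-k}\sum_{j=1}^m \pi_j(k)\bigl([\bx_j(k)]_\ell-c_\ell\bigr)^2
\end{equation*}
for all $t\ge k\ge0$, where $c_\ell=\pi(0)'[\bx(0)]_{:\ell}=\sum_{i=1}^m\pi_i(0)[\bx_i(0)]_\ell$ is the scalar consensus value for the $\ell$-th coordinate, as identified in Theorem~\ref{thm:key}. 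The contraction factor on the right-hand side depends only on $\delta$, $\beta$, and $p^*$, all of which are coordinate-independent quantities determined by the matrix sequence and graph sequence alone.

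Next I would sum these $n$ inequalities over $\ell=1,\dots,n$. Interchanging the (finite) sum over $\ell$ with the sums over $i$ and $j$, and using the identity $\|\bx_i(t)-c\|^2=\sum_{\ell=1}^n([\bx_i(t)]_\ell-c_\ell)^2$, the left-hand side becomes $\sum_{i=1}^m\pi_i(t)\|\bx_i(t)-c\|^2$ and the right-hand side becomes $\bigl(1-\tfrac{\delta\beta^2}{4p^*}\bigr)^{t-k}\sum_{j=1}^m\pi_j(k)\|\bx_j(k)-c\|^2$, which is exactly the claimed bound. This step works because the contraction factor is a scalar independent of $\ell$, so it factors out of the sum over coordinates.

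There is essentially no obstacle here beyond bookkeeping: the key conceptual point is that the dynamic is linear and the Lyapunov estimate of Theorem~\ref{thm:key} is homogeneous of degree two in the state, so coordinate-wise decoupling is exact. The only thing to verify carefully is that the constant $c$ appearing on each coordinate matches the definition given in the corollary, which follows directly from applying the scalar consensus identity $\pi(t)'x(t)=\pi(0)'x(0)$ (established in the proof of Theorem~\ref{thm:key}) to each coordinate sequence $\{x_i^{(\ell)}(t)\}$.
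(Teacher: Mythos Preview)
Your proposal is correct and follows essentially the same approach as the paper: apply Theorem~\ref{thm:key} coordinate-wise to obtain the scalar inequality for each $\ell\in[n]$, then sum over $\ell$ and use $\|\bx_i(t)-c\|^2=\sum_{\ell=1}^n([\bx_i(t)]_\ell-c_\ell)^2$. The paper presents exactly this argument in the paragraph immediately preceding the corollary.
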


Some further implications of Theorem~\ref{thm:key} are discussed in the following section.

%-----------------------------------------------------------------
\subsection{Implications of Theorem~\ref{thm:key} }
%-----------------------------------------------------------------
We present some implications of Theorem~\ref{thm:key} regarding the improvement of the best known rate of $O(m^2)$ and the convergence properties of the matrix products $A(t)\cdots A(k+1)A(k)$.
%(cf.~\eqref{eq:mp})

Let Assumption~\ref{assume:minimal} hold, and assume also that the weight matrices $A(t)$, $t\ge0$, are doubly stochastic. Then, we have $\pi(t)=\frac{1}{m}\1$ and the relation
of Theorem~\ref{thm:key} reduces to (after multiplication by $m$):
\begin{equation}\label{eq:dbs}
\left\|x(t) - \bar x(0)\1 \right\|^2
 \le \left(1-\frac{\b^2}{4mp^*}\right)^{t-k}
 \left\|x(k) - \bar x(0)\1 \right\|^2,
\end{equation}
with $\bar x(0)=\frac{\1'x(0)}{m}$.
Since the maximum path length from the root to any other node cannot exceed $m-1$, i.e., $p^*(s)\le m-1$,
it follows that
\begin{equation*}
\left\|x(t) - \bar x(0)\1\right\|^2
 \le \left(1-\frac{\b^2}{4m(m-1) }\right)^{t-k}
 \left\|x(k) - \bar x(0)\1\right\|^2.
\end{equation*}
Thus, when $\beta$ does not depend on $m$,
the convergence rate has dependency of $O(m^2)$ in terms of the number $m$ of agents,
which is the same as the rate result in~\cite{Nedic2009b}]; see Theorem~\ref{thm:known}.

Suppose now that we want to construct the graphs $G_t$ such that Assumption~\ref{assume:minimal} holds
and we want to get the most favorable rate dependency on $m$. In this case, the following result is valid.
\begin{theorem}\label{thm:mlogm}
There is a sequence $\{G_t\}$ of
regular undirected graphs  such that for all $x(0)\in\R^m$ and all $t\ge k\ge0$,
\begin{equation*}
\left\|x(t) - \bar x(0)\1\right\|^2
 \le q^{t-k}
 \left\|x_{j}(k) - \bar x(0)\1\right\|^2,
\end{equation*}
with $q=1-\frac{1}{4^3m \lceil\frac{\log_2 m}{2}\rceil }$ and $\bar x(0)=\frac{\1'x(0)}{m}$.
\end{theorem}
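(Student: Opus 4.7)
The plan is to invoke the doubly stochastic specialization of Theorem~\ref{thm:key} recorded in~\eqref{eq:dbs} on a carefully chosen sequence of regular undirected graphs. I would take $G_t = G$ and $A(t) = A$ stationary in time, with $G$ a fixed regular graph on $m$ vertices and $A$ a doubly stochastic weight matrix compliant with a rooted directed spanning tree $\mathsf{T}$ of $G$. Since $A$ is doubly stochastic, $\pi(t) = \frac{1}{m}\1$ is the (unique, by Lemma~\ref{lem:unique}) absolute probability sequence, hence $\delta = 1/m$ in Assumption~\ref{assume:minimal}, and the contraction rate supplied by Theorem~\ref{thm:key} reduces to $1 - \frac{\beta^2}{4 m p^*}$. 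So the target becomes to drive $p^*/\beta^2$ down to $16\lceil(\log_2 m)/2\rceil$, and in particular to produce $\beta = 1/4$ together with $p^* \le \lceil(\log_2 m)/2\rceil$.

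\textbf{Construction.} I would choose $G$ and $\mathsf{T}$ so that (i) $\mathsf{T}$ is rooted at some vertex and has longest directed path of length at most $\lceil(\log_2 m)/2\rceil$, and (ii) a doubly stochastic matrix $A$ with $A_{ii} \ge 1/4$ and $A_{ij} \ge 1/4$ on each edge of $\mathsf{T}$ exists. The candidate is a balanced rooted tree of the prescribed depth turned into a regular undirected graph by attaching precisely the extra edges needed to equalize every vertex degree; the weights would be assigned as $1/4$ on the diagonal, $1/4$ on each tree edge, and the remaining mass distributed on the extra regularity edges so that both row and column sums equal one. With this in hand, the four parts of Assumption~\ref{assume:minimal} hold directly: partial irreducibility via $\mathsf{T}$, aperiodicity via the positive diagonal, partial uniform positivity with $\beta = 1/4$, and a uniformly bounded absolute probability sequence with $\delta = 1/m$.

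\textbf{Finishing and the main obstacle.} Substituting $\beta = 1/4$, $\delta = 1/m$, and $p^* \le \lceil(\log_2 m)/2\rceil$ into~\eqref{eq:dbs} yields
\[\|x(t) - \bar x(0)\1\|^2 \le \left(1 - \frac{1}{4^3 m \lceil(\log_2 m)/2\rceil}\right)^{t-k} \|x(k) - \bar x(0)\1\|^2,\]
which is exactly the stated inequality. The main obstacle is the combinatorial step: requiring each row of $A$ to carry three off-diagonal entries of size at least $1/4$ forces the spanning tree to have bounded branching, which in turn tensions against making the tree shallow enough to contain all $m$ vertices inside depth $\lceil(\log_2 m)/2\rceil$; simultaneously, the ambient graph must be regular and the matrix doubly stochastic. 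Exhibiting a family of graphs that meets all these demands at once — so that the depth matches $\lceil(\log_2 m)/2\rceil$ up to the stated constants — is the crux of the theorem, and it is where the ``tree-like regular'' structure enters in an essential way.
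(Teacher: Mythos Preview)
Your reduction is exactly the paper's: specialize Theorem~\ref{thm:key} to the doubly stochastic case via~\eqref{eq:dbs}, then exhibit a static regular graph with $\beta=1/4$ and a rooted spanning tree of depth at most $\lceil(\log_2 m)/2\rceil$. Where you stop --- the explicit construction --- is precisely the content the paper supplies, and your sketch mislocates how it is done. You propose to \emph{start} from a balanced tree of the target depth and then add edges for regularity; but any spanning tree of depth $\lceil d/2\rceil$ sitting inside a $3$-regular graph has at most $3\cdot 2^{\lceil d/2\rceil}-2$ vertices, far short of $m=2^d$, and raising the degree to absorb more vertices would push $\beta$ below $1/4$ and destroy the constant $4^3$. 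So the ``balanced tree first, regularize second'' plan cannot meet all constraints simultaneously.

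The paper runs the construction in the opposite order. It takes $m=2^d$, builds a full binary tree on $2^d-1$ nodes, hangs one extra node above it as a new root with a single child, strings all the leaves together into a path, and finally joins the two extreme leaves back to the new root. This yields a $3$-regular undirected graph, and setting $A_{ij}=1/4$ for every neighbor (and on the diagonal) gives a symmetric, hence doubly stochastic, matrix with $\beta=1/4$. The spanning tree $\mathsf T$ used for the bound is then a shortest-path tree from the new root \emph{in the augmented graph}: nodes in the upper half of the binary tree are reached by descending, while nodes in the lower half are reached via the root-to-leaf edges and the leaf path. The ``extra'' edges you set aside for regularity are thus doing double duty --- they equalize degrees \emph{and} supply the shortcuts that collapse the depth --- which is how the branching-versus-depth tension you correctly flagged gets resolved.
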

\begin{proof}
We will construct an undirected graph sequence $\{G_t\}$ that satisfies
Assumption~\ref{assume:minimal}. Let $m=2^d$ for some integer $d\ge1$.
Let $t$ be arbitrary but fixed time. Select $2^d-1$ agents and construct an undirected binary tree
with these agents as nodes. Next, add one extra agent as a root with a single child (see Figure~\ref{fig:regular1}).
Thus, each agent $i$ except for the root and the leaf agents has the degree equal to 3. Consider, now connecting
all leaf-nodes with undirected edges (see Figure~\ref{fig:regular2}).
Now, all leaf-agents have degree equal to 3 except for the far most left and far most right
agents, each of which has the degree equal to~2. Connect these two agents to the root node (see Figure~\ref{fig:regular3}).
In this way, the far most left and far most right leaf agents, as well as the root agent have degree 3.
\begin{figure}[h!] \vskip -1.5pc
\centering
\subfloat[ Binary tree]{\label{fig:regular1}
\includegraphics[width=0.3\linewidth] {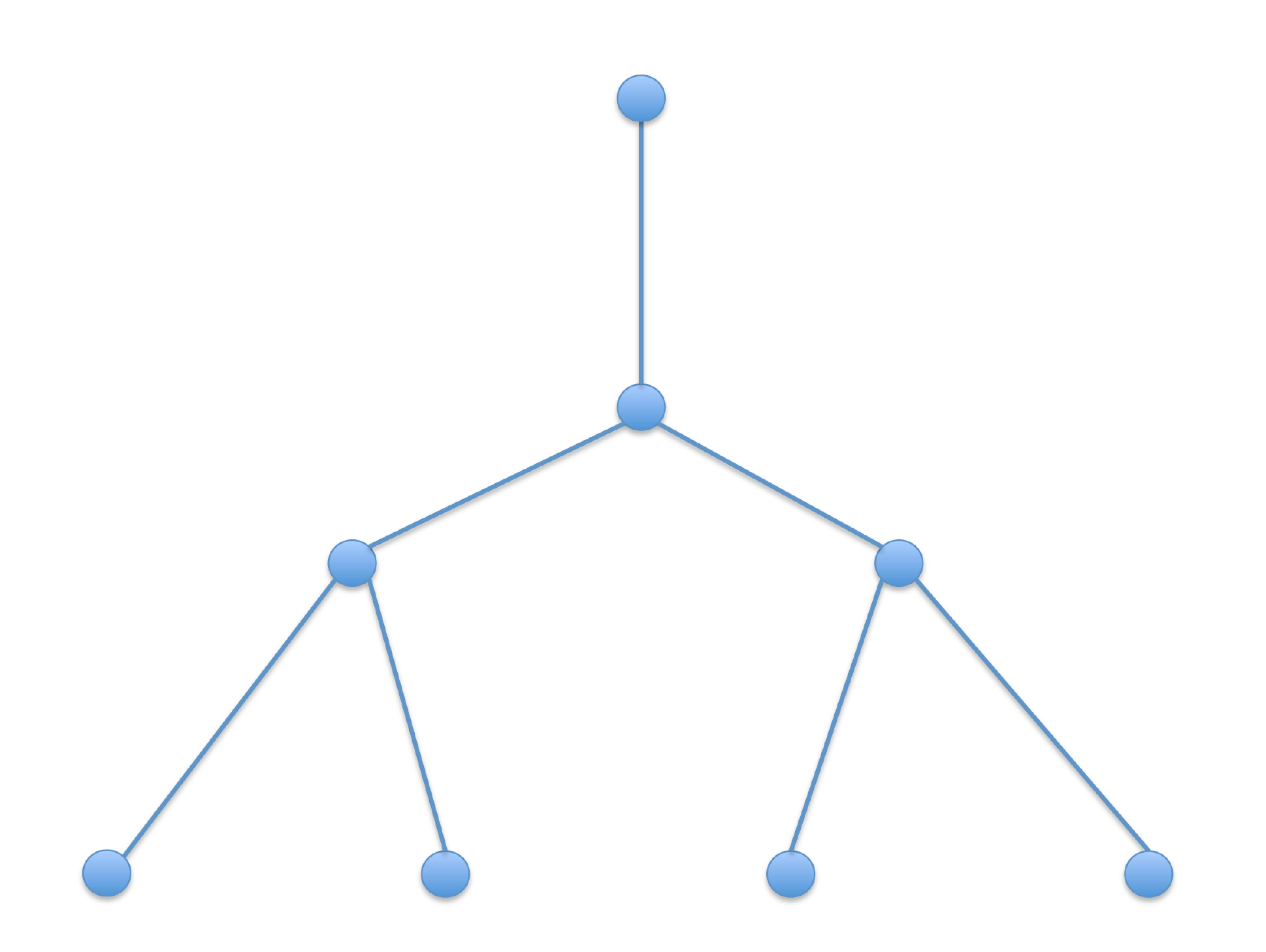} }
\subfloat[Connected leaves]{\label{fig:regular2}
\includegraphics[width=0.3\linewidth] {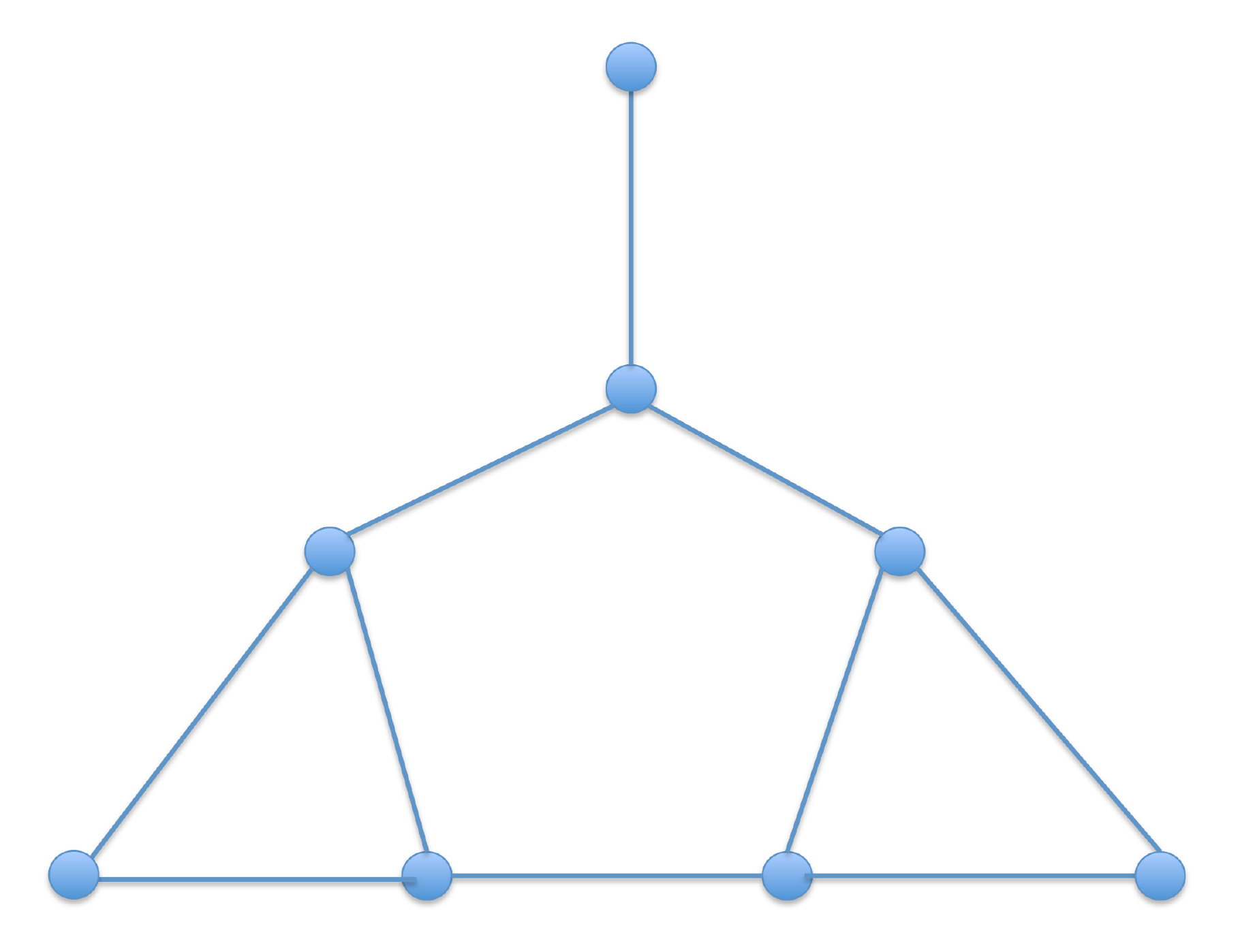}}
\subfloat[3-regular graph]{\label{fig:regular3}
\includegraphics[width=0.3\linewidth] {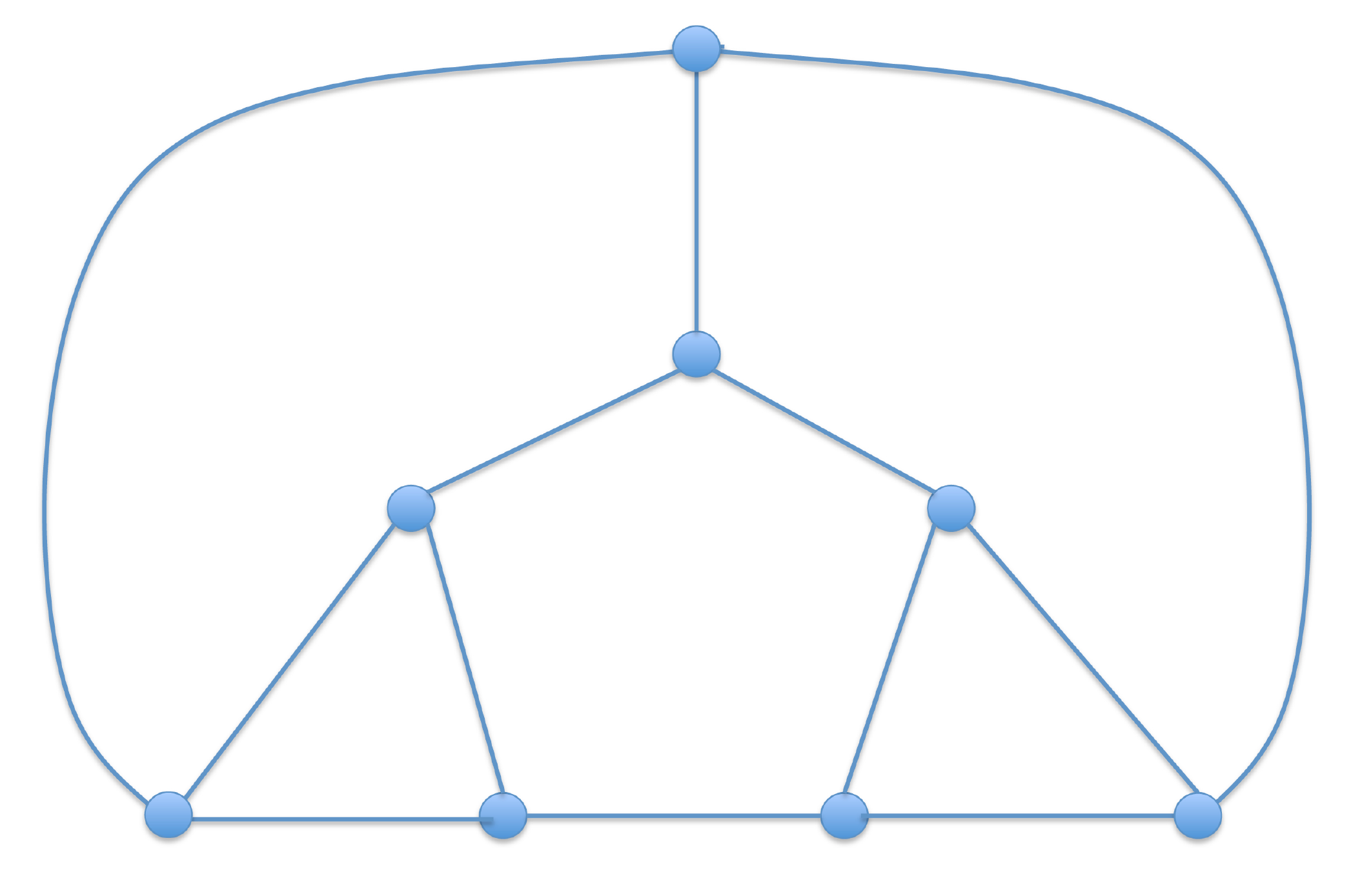}}
\caption{The construction of the 3-regular graph over $2^3=8$ nodes used in Theorem~\ref{thm:mlogm}.}
\vskip -1.5pc
\end{figure}
In the resulting regular undirected graph, we let
$A_{ij}(t)=\frac{1}{4}$ for all $j\in N_i(t)\cup\{i\}$ and for all $i$, so that
$\beta=\frac{1}{4}.$
The shortest path from the root agent to any other agent in the graph is at most $\lceil \frac{d}{2}\rceil$
(going down from the root of the tree to the nodes at the depth $\lceil \frac{d}{2}\rceil$, and going through the leaf nodes to reach those that are the depth larger than $\lceil \frac{d}{2}\rceil$).

Using the same construction, for all times $t$, we have that $\{A(t)\}$ is a sequence of doubly stochastic matrices, and therefore $\pi(t)=\frac{1}{m}\1$ for all $t$.
Thus, Assumption~\ref{assume:minimal} is satisfied, and the estimate in~\eqref{eq:dbs} reduces to
\begin{equation*}
\left\| x(t) - \bar x(0)\1\right\|^2
 \le \left(1-\frac{1}{4^3m \lceil\frac{d}{2}\rceil }\right)^{t-k}
  \left\| x(k) - \bar x(0)\1\right\|^2.
\end{equation*}
The result follows by noting that $d=\log_2{m}$.
\end{proof}
Theorem~\ref{thm:mlogm} shows that the exponential convergence rate with the ratio of the order
$1-O(\frac{1}{m\log_2 m})$ is achievable for consensus on some tree-like regular undirected graphs.
This improves the best known bound with the ratio of the order $1- O(\frac{1}{m^2})$
for undirected graphs and doubly stochastic matrices~\cite{Nedic2009b}.
We next consider the implication of Theorem~\ref{thm:key} for the convergence of matrix products
\[A(t:k)\triangleq A(t)\cdots A(k+1)A(k)\qquad\hbox{for all }t\ge k\ge0,\]
where $A(t:k)\triangleq A(k)$ whenever $t=k$.

\begin{theorem}\label{thm:matrix-conv}
If Assumption~\ref{assume:minimal},
then for all $t\ge k\ge0$,
\begin{equation*}
\left\| A(t:k)-\1\pi(k)'\right\|^2
 \le  \frac{1}{\delta}  \left(1-\frac{\delta \b^2}{4p^* }\right)^{t-k}
\left\| I- \1\pi(k)'\right\|^2. %\qquad\hbox{for all }t\ge k\ge0.
\end{equation*}
\end{theorem}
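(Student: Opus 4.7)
The plan is to reduce the matrix bound to $m$ parallel applications of Theorem~\ref{thm:key}, one per column. Fix $j\in[m]$ and run the scalar weighted-averaging dynamic $x(s+1)=A(s)x(s)$ starting at time $k$ with $x(k)=e_j$, the $j$-th standard unit vector of $\R^m$. Then $x(t+1)=A(t:k)\,e_j$ is exactly the $j$-th column of $A(t:k)$, and the invariant of the adjoint dynamic gives $\pi(s)'x(s)=\pi(k)'e_j=\pi_j(k)$ for every $s\ge k$---this is the same identity $\pi(t)'x(t)=\pi(0)'x(0)$ used at the end of the proof of Theorem~\ref{thm:key}, now based at time $k$ rather than $0$.

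Applying Theorem~\ref{thm:key} to this trajectory, from time $k$ up to time $t+1$, yields
\[
\sum_{i=1}^m \pi_i(t+1)\bigl(A(t:k)_{ij}-\pi_j(k)\bigr)^2 \;\le\; q^{\,t-k+1}\sum_{\ell=1}^m \pi_\ell(k)\bigl(\delta_{\ell j}-\pi_j(k)\bigr)^2,
\]
with $q=1-\delta\beta^2/(4p^*)\in(0,1)$. I would then invoke Assumption~\ref{assume:minimal}(d), namely $\pi_i(t+1)\ge\delta$, on the left, and the trivial bounds $\pi_\ell(k)\le 1$ and $q^{t-k+1}\le q^{t-k}$ on the right, to obtain
\[
\delta\sum_{i=1}^m \bigl(A(t:k)_{ij}-\pi_j(k)\bigr)^2 \;\le\; q^{\,t-k}\sum_{\ell=1}^m \bigl(\delta_{\ell j}-\pi_j(k)\bigr)^2.
\]
Each side is precisely the squared Euclidean norm of the $j$-th column of $A(t:k)-\mathbf{1}\pi(k)'$ and of $I-\mathbf{1}\pi(k)'$, respectively. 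Summing over $j\in[m]$ converts both sides to squared Frobenius norms, and dividing by $\delta$ delivers the stated estimate.

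The essential insight is the first step: viewing the columns of the transition product as trajectories of the weighted-averaging dynamic with canonical initial data $e_j$. After that, the proof is routine---apply Theorem~\ref{thm:key} column-wise, use $\pi_i(t)\ge\delta$ on the left and $\pi_\ell(k)\le 1$ on the right, and sum. The only minor bookkeeping item is that $A(t:k)$ is a product of $t-k+1$ matrices while the exponent appearing in the conclusion is $t-k$; this off-by-one is absorbed without loss by the slack $q^{t-k+1}\le q^{t-k}$, so no sharpening of Theorem~\ref{thm:key} is needed, and in particular the boundary case $t=k$ is recovered since an extra factor of $q\le 1$ comfortably fits inside the $1/\delta$ on the right-hand side.
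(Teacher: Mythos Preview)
Your argument is correct and follows the same strategy as the paper's: apply Theorem~\ref{thm:key} to the weighted-averaging trajectory started at time $k$, then strip the weights using $\pi_i\ge\delta$ on the left and $\pi_\ell(k)\le 1$ on the right. The one substantive difference is in how the matrix norm is extracted at the end. You specialize the initial state to $x(k)=e_j$, obtain the column-wise inequality, and sum over $j$, which yields the bound in the \emph{Frobenius} norm. The paper instead keeps $x(k)$ arbitrary, derives the pointwise estimate
\[
\bigl\|[A(t:k)-\1\pi(k)']x\bigr\|^2 \le \tfrac{1}{\delta}\,q^{\,t-k}\,\bigl\|[I-\1\pi(k)']x\bigr\|^2
\quad\text{for all }x\in\R^m,
\]
and then takes the supremum over unit vectors to obtain the \emph{operator}-norm version. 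Both conclusions are valid consequences of the same per-vector estimate and either suffices for Corollary~\ref{cor:ergodic}; your route is marginally more concrete, while the paper's gives a slightly different (and in this formulation neither uniformly stronger nor weaker) inequality. Your off-by-one bookkeeping via $q^{\,t-k+1}\le q^{\,t-k}$ is harmless.
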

\begin{proof}
By Theorem~\ref{thm:key} and the fact that $\pi'(s)x(s)=\pi'(0)x(0)$ for all $s$, we have that for all $t\ge k\ge0$,
\begin{eqnarray*}
&&\sum_{i=1}^m \pi_i(t)\left(x_{i}(t) - \pi(k)'x(k) \right)^2 \cr
&& \le  \left(1-\frac{\delta \b^2}{4p^* }\right)^{t-k}
 \sum_{j=1}^m \pi_j(k)\left(x_{j}(k) - \pi(k)'x(k) \right)^2. %\qquad\hbox{for all }t\ge k\ge0.
\end{eqnarray*}
Since $\pi_i(k)\le 1$ for all $i$ and $k$, and $\pi_i(t)\ge\delta$ by Assumption~\ref{assume:minimal}(d),
it follows that for all $t\ge k\ge0$,
\begin{equation*}
\left\| x(t) - \pi(k)'x(k) \,\1\right\|^2
 \le  \frac{1}{\delta} \left(1-\frac{\delta \b^2}{4p^* }\right)^{t-k}
 \left\| x(k) - \pi(k)'x(k)\,\1\right\|^2. %\qquad\hbox{for all }t\ge k\ge0.
\end{equation*}
Noting that $x(t)=A(t:k)x(k)$ and $\pi(k)'x(k)\,\1=\1\pi(k)'\, x(k)$, we can write: for all $t\ge k\ge0,$
\begin{equation}\label{eq:trunc}
\left\| [A(t:k)-\1\pi(k)']x(k)\right\|^2
 \le  \frac{1}{\delta} \left(1-\frac{\delta \b^2}{4p^* }\right)^{t-k}
 \left\| [I- \1\pi(k)']x(k)\right\|^2. %\qquad\hbox{for all }t\ge k\ge0.
\end{equation}
Since the matrices $A(t)$ do not depend on the state variables $x(s),$ $0\le s<t$, the situation is similar
to constructing
$\{x(t)\}_{t\ge k}$ by the truncated matrix sequence
$\{A(t)\}_{t\ge k}$, where the dynamic is started at time $k$ in any state $x(k)$.
Then, relation~\eqref{eq:trunc} can be seen to hold for any $x(k)\in\R^n$.
Let $x(k)=x\in\R^n$ and obtain for all $t\ge k\ge0$,
\begin{equation*}
\sup_{x\ne 0}\frac{\left\| [A(t:k)-\1\pi(k)']x\right\|^2}{\|x\|^2}
 \le  \frac{1}{\delta}  \left(1-\frac{\delta \b^2}{4p^* }\right)^{t-k}
\sup_{x\ne0}\frac{\left\| [I- \1\pi(k)']x\right\|^2}{\|x\|^2}, %\qquad\hbox{for all }t\ge k,
\end{equation*}
which is equivalent to the stated relation.
%: for all $t\ge k\ge0$,
%\begin{equation*}
%\left\| A(t:k)-\1\pi(k)'\right\|^2
% \le  \frac{1}{\delta}  \left(1-\frac{\delta \b^2}{4p^* }\right)^{t-k}
%\left\| I- \1\pi(k)'\right\|^2. \end{equation*}
\end{proof}

We have the following immediate consequence of Theorem~\ref{thm:matrix-conv}, by letting $t\to\infty$.

\begin{corollary}\label{cor:ergodic}
Under Assumption~\ref{assume:minimal}, the  sequence $\{A(t)\}$ is ergodic:
$\lim_{t\to\infty} A(t)\cdots A(k)=\1\pi(k)'$ for all  $k\ge0.$
\end{corollary}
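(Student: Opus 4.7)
The plan is to derive Corollary~\ref{cor:ergodic} directly from Theorem~\ref{thm:matrix-conv} by passing to the limit $t\to\infty$ in the exponential bound established there. Since $\beta\in(0,1]$ (from stochasticity combined with Assumption~\ref{assume:minimal}(c)), $\delta\in(0,1)$ (from Assumption~\ref{assume:minimal}(d)), and $p^*\ge 1$ is finite (in fact $p^*\le m-1$, since the longest shortest path in any tree on $m$ nodes can contain at most $m-1$ edges), the contraction factor
\[
q \triangleq 1 - \frac{\delta\beta^2}{4p^*}
\]
lies strictly in $(0,1)$.

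With this in hand, I would fix an arbitrary $k\ge 0$ and apply Theorem~\ref{thm:matrix-conv} to obtain, for all $t\ge k$,
\[
\bigl\|A(t:k) - \1\pi(k)'\bigr\|^2 \le \frac{1}{\delta}\,q^{\,t-k}\,\bigl\|I - \1\pi(k)'\bigr\|^2.
\]
The right-hand side is a finite constant (independent of $t$) multiplied by $q^{t-k}$, so it converges to $0$ as $t\to\infty$. Therefore $\|A(t:k)-\1\pi(k)'\|\to 0$, which is exactly the claim $\lim_{t\to\infty}A(t)\cdots A(k+1)A(k)=\1\pi(k)'$.

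There is essentially no obstacle here: the corollary is a direct limiting statement, and the only items to verify are that the contraction ratio $q$ is in $(0,1)$ and that the prefactor $\frac{1}{\delta}\|I-\1\pi(k)'\|^2$ is finite and independent of $t$. Both are immediate from Assumption~\ref{assume:minimal} together with the elementary bound $p^*\le m-1$ on tree diameters. One small point worth flagging in the write-up is that the limit $\1\pi(k)'$ depends on $k$, and consistency with the adjoint relation $\pi'(k)=\pi'(k+1)A(k)$ (Definition~\ref{def:ap}) ensures that the limits for different starting indices $k$ agree with the telescoping $A(t:k)=A(t:k+1)A(k)$, which provides a sanity check but is not strictly needed to conclude ergodicity.
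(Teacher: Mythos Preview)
Your proposal is correct and follows exactly the same approach as the paper, which simply states the corollary as an immediate consequence of Theorem~\ref{thm:matrix-conv} obtained by letting $t\to\infty$. Your added verification that $q\in(0,1)$ and that the prefactor is finite is a welcome bit of explicitness, but there is nothing methodologically different here.
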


%%%%%%%%%%%%%%%%%%%%%%%%%%%%%%%%%%%%%%%%
\section{Constrained Consensus}\label{sec:constrained-consensus}
%%%%%%%%%%%%%%%%%%%%%%%%%%%%%%%%%%%%%%%%
In this section, we consider consensus problems where the agent values are
constrained to given sets. Such constraints are inevitable in a number of applications including motion planning and alignment problems, where each agent's position is limited to a
certain region or range \cite{Pappas2008}.
Constrained consensus was first introduced in \cite{NOP2010} where
a simple discrete-time projected constrained consensus algorithm was proposed.
The analysis of the algorithm in \cite{NOP2010} relies on convergence properties of doubly stochastic
matrices. An alternative analysis developed in \cite{ren12}
gets around this limitation and also takes into account transmission delays,
but the proofs are intricate and no convergence rate results are established.
In \cite{barrier}, a continuous-time constrained consensus algorithm
was proposed using logarithmic barrier functions.
In \cite{discarded} and \cite{singapore}, discrete-time constrained consensus algorithms were presented
for a special case in which the variable of each agent is a scalar quantity.

In the sequel, we will follow the algorithm in \cite{NOP2010}.
Unlike the existing analysis in \cite{NOP2010,ren12}, we here
adopt dynamic system point of view and apply a Lyapunov approach, as done in the unconstrained consensus problem.
This approach would allow us to provide an elegant proof of convergence
and characterize the convergence rate under appropriate assumptions.

%-----------------------------------------------------------------------------------
\subsection{Projected Weighted-Averaging Algorithm}
%-----------------------------------------------------------------------------------

We assume that each agent has a constraint set $X_i\subseteq\R^n$, which is a convex and closed,
and the agents need to agree on a common point $c\in\cap_{i=1}^m X_i$.
We will work under the following assumption on the sets $X_i$.
\begin{assumption}\label{assume:sets}
The sets $X_i\subseteq\R^n$ are nonempty, closed, and convex, and their
intersection is nonempty, i.e.,
$X\triangleq \cap_{i=1}^m X_i\ne\emptyset.$
\end{assumption}
The constrained consensus problem is as follows.\\
{\bf [Constrained Consensus]} \
{\it Assuming that each agent $i$ knows only its set $X_i$, design a distributed
algorithm obeying the communication structure given by graph $G_t$ at each time $t$ and ensuring that,
for every set of initial values $\bx_i(0)\in\mathbb{R}^n$, $i\in [m]$, the following limiting behavior
emerges:
$\lim_{t\to\infty} \bx_i(t)=c$
for all $i\in [m]$ and some $c\in X$.}\\
To solve the constrained consensus problem, we consider the algorithm proposed in~\cite{NOP2010},
which has the following form.
Assuming that each agent starts with some initial vector $\bx_i(0)\in X_i$ at time $t=0$,
each agent $i$ updates at times $t=1,2,\ldots,$ as follows:
\begin{align}\label{eq:walgo}
\bw_i(t+1) & =  \sum_{j=1}^m A_{ij}(t) \bx_j(t),\cr
\bx_i(t+1) & =  \mathbb{P}_{X_i}[\bw_i(t+1)],
\end{align}
where $\Pr_{X_i}[\cdot]$ is the Euclidean projection on the set $X_i$.

We will show that, under Assumption~\ref{assume:minimal}
and Assumption~\ref{assume:sets}, the algorithm converges to a consensus point in
the intersection set $X$. However, unlike the results for unconstrained consensus problems,
we cannot characterize the consensus point more precisely.
We will also prove that, under some further conditions on the sets $X_i$,
the convergence rate of the algorithm is linear.
The behavior of the algorithm~\eqref{eq:walgo} is very similar to that of
the basic weighted-averaging algorithm  in~\eqref{eq:wcalgo} for the unconstrained consensus.
The intuition
%\[\bx_i(t+1) = \sum_{j=1}^m A_{ij}(t) \bx_j(t),\]
comes from the following observation: the iterates of the algorithm~\eqref{eq:walgo} satisfy
$\bx_i(t+1)  =  \mathbb{P}_{X_i}\left[ \sum_{j=1}^m A_{ij}(t) \bx_j(t)\right].$
The inner averaging mapping (defined through $A(t)$) possesses some nice contraction properties under
Assumption~\ref{assume:minimal} on the graphs and the matrices $A(t)$.
This mapping is followed by a projection mapping, which is non-expansive.
Thus, one would expect that the resulting composite map %(an averaging map followed by a projection)
is also contractive,
with a nearly the same contraction constant as the averaging map.

The non-expansiveness and few other properties of the projection map are summarized below.
Given a (nonempty) closed convex set $Y\subseteq\rn$,
the projection mapping $y\mapsto\Pr_Y[y]$ is non-expansive, i.e.,
\begin{equation}\label{eq:non-ex}
\|\Pr_Y[x]- y\|\le \|x-y\|\quad\hbox{for all $x\in\R^n$ and $y\in Y$},
\end{equation}
which is one of the key properties used in
the analysis of projection-based approaches.
This and other properties of the projection mapping can be found, for example,
in~\cite{Facchinei2003}, Volume 2, 12.1.13 Lemma, page 1120.
Another useful relation for the projection mapping is given by a variational inequality:
\begin{equation}\label{eq:optimalp}
\left(\Pr_Y[x]-x\right)'(y-\Pr_Y[x])\ge 0 %\ \hbox{for all $x\in\re^n$ and $y\in Y$}.
\end{equation}
for all $x\in\re^n$ and $y\in Y$.
The relation in~\eqref{eq:optimalp}
can be obtained by noting that the vector $\Pr_Y[x]$ is the unique solution of
the minimization problem $\min_{y\in Y}\|y-x\|^2$ and
by using the optimality condition for the solution.
The formal proof of relation~\eqref{eq:optimalp} can be found for example in~\cite{ourbook},
Proposition 2.2.1(b), page 55.

%-------------------------------------------------------------------------------
\subsection{Quadratic Lyapunov Comparison Function}\label{sec:lyapunov-constr}
%---------------------------------------------------------------------------------
Our choice of Lyapunov function is similar to the Lyapunov comparison function~\eqref{eq:vq}
for the weighted-averaging algorithm in the case of an unconstrained consensus (see Section~\ref{sec:conv}). The similarity is in the use of an adjoint sequence $\{\pi(t)\}$ associated with
the matrix sequence $\{A(t)\}$ (cf.~\eqref{eq:adjoint}); however, there is a slight difference in the choice of the centering term $\nu'x$ in~\eqref{eq:vq},
which is replaced by an arbitrary value.
Specifically, we consider the function of the following form: for all $t\ge0$ and $y\in\R^n$,
\begin{align}\label{eq:funv}
\V(t,y)\triangleq\sum_{i=1}^m\pi_i(t)
\left\|\bx_i(t) - y \right\|^2. % \ \hbox{for $t\ge0$ and $y\in\R^n$}.
\end{align}
When the values of $y$ are constrained so that $y\in X$,
the function $\V$ has an important decrease property.
To establish that property we use the following result.

\begin{lemma}\label{lemma:wlema}
Let $v\in\R^m$ be a given vector and let $\phi\in\R^m$ be a given stochastic vector.
Then, we have for any $s\in\R$,
\[(\phi'v-s)^2
= \sum_{j=1}^m\phi_j(v_j-s)^2-\frac{1}{2}\sum_{j=1}^m\sum_{\ell=1}^m\phi_j\phi_\ell(v_j-v_\ell)^2.\]
\end{lemma}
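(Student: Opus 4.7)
The identity is essentially the same algebraic manipulation used in the proof of Lemma~\ref{lem:qlyap}, now applied to $(v_j-s)$ in place of $x_j$, and with the stochastic vector $\phi$ playing the role of a row of $A$. So my plan is to follow exactly that template.

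First, I would exploit that $\phi$ is stochastic, i.e.\ $\sum_j \phi_j = 1$, to rewrite the scalar $s$ as $s=s\sum_j \phi_j$, so that
\[
\phi'v - s = \sum_{j=1}^m \phi_j(v_j - s).
\]
Squaring both sides and expanding the square as a double sum gives
\[
(\phi'v-s)^2 = \sum_{j=1}^m\sum_{\ell=1}^m \phi_j\phi_\ell (v_j-s)(v_\ell-s).
\]

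The key algebraic step is the polarization identity $ab = \tfrac{1}{2}(a^2+b^2-(a-b)^2)$ applied with $a=v_j-s$ and $b=v_\ell-s$, which yields
\[
(v_j-s)(v_\ell-s)=\tfrac{1}{2}\bigl[(v_j-s)^2 + (v_\ell-s)^2 - (v_j-v_\ell)^2\bigr],
\]
since the $s$ terms cancel in $(v_j-s)-(v_\ell-s)=v_j-v_\ell$. Substituting this into the double sum and splitting it into three pieces, I would use $\sum_\ell \phi_\ell = 1$ on the first term and $\sum_j \phi_j = 1$ on the second term to collapse each of them to a single sum equal to $\sum_{j=1}^m \phi_j(v_j-s)^2$. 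Combined, these two single sums contribute $\sum_j \phi_j (v_j-s)^2$, while the third piece gives $-\tfrac{1}{2}\sum_{j,\ell}\phi_j\phi_\ell(v_j-v_\ell)^2$, producing the stated identity.

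There is no real obstacle here; the only point to be careful about is keeping track of which index the stochasticity of $\phi$ is summed over in each of the first two terms, so that both collapses to the same expression $\sum_j \phi_j(v_j-s)^2$ are justified. The result is independent of $s$ in the sense that $s$ only enters through the shift $v_j-s$, which is consistent with the identity's use later, where $s$ will be chosen to be a projection of interest lying in the constraint set $X$.
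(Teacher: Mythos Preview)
Your proposal is correct and follows essentially the same approach as the paper's own proof: write $\phi'v-s=\sum_j\phi_j(v_j-s)$ using $\phi'\1=1$, square to a double sum, apply the polarization identity $ab=\tfrac{1}{2}(a^2+b^2-(a-b)^2)$, and collapse the first two terms via $\sum_\ell\phi_\ell=1$ and $\sum_j\phi_j=1$. There is nothing to add.
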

\begin{proof}
We note that $\phi'\1=1$ since $\phi$ is stochastic vector. Thus, we have
$\phi'v-s=\phi'(v-s\1)=\sum_{j=1}^m\phi_j(v_j-s).$
Therefore, by taking the square we obtain
\begin{eqnarray*}
(\phi'v-s)^2  =  \sum_{j=1}^m\sum_{\ell=1}^m \phi_j\phi_\ell(v_j-s)(v_\ell-s).
\end{eqnarray*}
Using the identity
$ab=\frac{1}{2}\left[ a^2 +b^2 - (a-b)^2\right],$
which is valid for any $a,b\in\R$,
we can further write
\begin{align*}
&(\phi'v-s)^2
=  \frac{1}{2}\sum_{j=1}^m\sum_{\ell=1}^m \phi_j\phi_\ell\left[(v_j-s)^2  +(v_\ell-s)^2-(v_j-v_\ell)^2\right]\cr
& =  \frac{1}{2}\sum_{j=1}^m\phi_j(v_j-s)^2 \left(\sum_{\ell=1}^m \phi_\ell \right)
  +\frac{1}{2}\sum_{\ell=1}^m \phi_\ell (v_\ell-s)^2\left(\sum_{j=1}^m\phi_j\right) \cr
&-\frac{1}{2}\sum_{j=1}^m\sum_{\ell=1}^m \phi_j\phi_\ell(v_j-v_\ell)^2\cr
&= \sum_{j=1}^m\phi_j(v_j-s)^2 - \frac{1}{2}\sum_{j=1}^m\sum_{\ell=1}^m \phi_j\phi_\ell(v_j-v_\ell)^2,
\end{align*}
where the last equality is obtained by using $\phi'\1=1$.
\end{proof}

Using Lemma~\ref{lemma:wlema}, we have the following decrease property for the function $\V(t,y)$
for $y\in X$.

\begin{theorem}\label{thm:fun}
Let Assumption~\ref{assume:minimal} and Assumption~\ref{assume:sets} hold.
Then, along the sequences $\{\bx_i(t)\}$, $i\in[m]$, produced by the algorithm~\eqref{eq:walgo} we have
for any initial vectors $\bx_i(0)\in X_i$, for $t\ge0$ and $y\in X$,
\begin{eqnarray*}
\V(t+1,y)
\le  \V(t,y) - \frac{\delta \b^2}{4 p^*}\,\max_{j,\ell\in V} \left\|\bx_{j}(t)  - \bx_{\ell}(t) \right\|^2,
%\qquad\hbox{for all $t\ge0$ and all $y\in X$},
\end{eqnarray*}
where the constants $\beta>0$ and $\delta>0$ are from
Assumptions~\ref{assume:minimal}(c) and~\ref{assume:minimal}(d), respectively, while
$p^*=\max_{t\ge0} p^*(t)$ with $p^*(t)$ being the maximum number of edges in any of the paths
from a root node to any other node in the tree $\mathsf{T}_t$ from Assumption~\ref{assume:minimal}(a).
\end{theorem}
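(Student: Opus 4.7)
My plan is to mimic the one-step analysis used for the unconstrained case, with the projection step absorbed via non-expansiveness. The structure of the target inequality is identical to the relation obtained by combining \eqref{eq:lyap-dec} with Lemma~\ref{lem:Dbound}, so the additional work comes only from handling the projection and upgrading the key algebraic identity from scalars to vectors.

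First, I would use non-expansiveness of the Euclidean projection. Since $y\in X\subseteq X_i$ for each $i$, and $\bx_i(t+1)=\Pr_{X_i}[\bw_i(t+1)]$, the inequality \eqref{eq:non-ex} gives $\|\bx_i(t+1)-y\|\le\|\bw_i(t+1)-y\|$. Multiplying by $\pi_i(t+1)\ge0$ and summing over $i$ yields
\begin{equation*}
\V(t+1,y)\le \sum_{i=1}^m\pi_i(t+1)\|\bw_i(t+1)-y\|^2.
\end{equation*}
The remaining task is to show that the right-hand side equals $\V(t,y)-D(t)$ for the same decrement $D(t)$ that appears in Lemma~\ref{lem:Dbound} (with $(x_j(t)-x_\ell(t))^2$ replaced by $\|\bx_j(t)-\bx_\ell(t)\|^2$).

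Next, I would apply a vector version of Lemma~\ref{lemma:wlema}: for any stochastic $\phi\in\R^m$, vectors $\bv_1,\dots,\bv_m,\bs\in\R^n$ one has
\begin{equation*}
\Bigl\|\sum_{j=1}^m\phi_j\bv_j-\bs\Bigr\|^2=\sum_{j=1}^m\phi_j\|\bv_j-\bs\|^2-\tfrac{1}{2}\sum_{j=1}^m\sum_{\ell=1}^m\phi_j\phi_\ell\|\bv_j-\bv_\ell\|^2,
\end{equation*}
which follows by applying Lemma~\ref{lemma:wlema} coordinate-wise and summing over the $n$ coordinates. Taking $\phi_j=A_{ij}(t)$ (the $i$th row of the stochastic matrix $A(t)$), $\bv_j=\bx_j(t)$, and $\bs=y$, we obtain an exact expression for $\|\bw_i(t+1)-y\|^2$. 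Multiplying by $\pi_i(t+1)$, summing over $i$, and using the adjoint relation $\sum_{i=1}^m\pi_i(t+1)A_{ij}(t)=\pi_j(t)$ from \eqref{eq:adjoint} produces
\begin{equation*}
\sum_{i=1}^m\pi_i(t+1)\|\bw_i(t+1)-y\|^2=\V(t,y)-D(t),
\end{equation*}
where $D(t)=\tfrac{1}{2}\sum_i\pi_i(t+1)\sum_{j,\ell}A_{ij}(t)A_{i\ell}(t)\|\bx_j(t)-\bx_\ell(t)\|^2$.

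Finally, I would bound $D(t)$ from below by repeating the argument of Lemma~\ref{lem:Dbound} verbatim with $|\cdot|$ replaced by $\|\cdot\|$: choose $j^*,\ell^*$ achieving $\max_{j,\ell}\|\bx_j(t)-\bx_\ell(t)\|$, pick the tree root $v^*$, use the triangle inequality for $\|\cdot\|$ to derive the vector analogue of \eqref{eq:dist-two}, walk down the directed path in $\mathsf{T}_t$ from $v^*$ to $j^*$, invoke $A_{j_{\kappa+1}j_\kappa}(t)A_{j_{\kappa+1}j_{\kappa+1}}(t)\ge\beta^2$ from Assumption~\ref{assume:minimal}(c), and finish with the convexity bound $\sum_{\kappa=0}^{p-1}\|\bx_{j_\kappa}-\bx_{j_{\kappa+1}}\|^2\ge\tfrac{1}{p}\|\bx_{j_0}-\bx_{j_p}\|^2$ (which is just Cauchy--Schwarz applied to the telescoping sum $\bx_{j_0}-\bx_{j_p}=\sum_\kappa(\bx_{j_\kappa}-\bx_{j_{\kappa+1}})$). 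Combining these three ingredients gives the claimed inequality. The only genuinely new ingredient is the projection step in the very first line; the rest is the vector transcription of machinery already developed in Section~\ref{sec:wave-rate}, so I do not expect any serious obstacle—the main thing to be careful about is that the non-expansiveness requires $y\in X_i$ for every $i$, which is exactly why the hypothesis $y\in X=\cap_i X_i$ is imposed.
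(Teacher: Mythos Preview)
Your proposal is correct and follows essentially the same route as the paper: non-expansiveness of the projection, the identity of Lemma~\ref{lemma:wlema} lifted to vectors, the adjoint relation~\eqref{eq:adjoint}, and then the tree-path lower bound of Lemma~\ref{lem:Dbound}. The only cosmetic difference is that the paper bounds the decrement by applying Lemma~\ref{lem:Dbound} to each coordinate separately and then using $\sum_{\kappa}\max_{j,\ell}([\bx_j]_\kappa-[\bx_\ell]_\kappa)^2\ge\max_{j,\ell}\|\bx_j-\bx_\ell\|^2$, whereas you rerun the Lemma~\ref{lem:Dbound} argument directly with norms; both yield the same final inequality.
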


\begin{proof}
From the definition of $\bw_i(t+1)$ in~\eqref{eq:walgo}, using the fact that the matrix $A(t)$ is stochastic
and applying Lemma~\ref{lemma:wlema} (where $\phi'=A_{i:}(t)$),
we see that the following relation is valid for each coordinate
index $\kappa\in[n]$ of the vector $\bw_i(t+1)$: for any $s\in\R$,
\begin{align*}
&([\bw_i(t+1)]_\kappa-s)^2
 = \sum_{j=1}^mA_{ij}(t)([\bx_j(t)]_\kappa - s)^2 \cr
& \  -\frac{1}{2}\sum_{j=1}^m\sum_{\ell=1}^m A_{ij}(t)A_{i\ell}\ell([\bx_j(t)]_\kappa - [\bx_\ell(t)]_\kappa)^2.
\end{align*}
Let $c\in\R^n$ be an arbitrary vector. Then,
by letting $s=c_\k$ in the preceding relation
and by summing over all coordinate indices $\kappa\in[n]$, we obtain the following relation:
for any $c\in\R^n$, for all $i\in[m]$ and all $t\ge0$,
\begin{align*}
\|\bw_i(t+1) - c\|^2
& = \sum_{j=1}^m A_{ij}(t)\|\bx_j(t) - c\|^2 \cr
& \ -\frac{1}{2}\sum_{j=1}^m\sum_{\ell=1}^m A_{ij}(t) A_{i\ell}(t)\|\bx_j(t)-\bx_\ell(t)\|^2.
\end{align*}
By multiplying with $\pi_i(t+1)$
and then summing over all $i$, we have for any $c\in\R^n$ and all $t\ge0$,
\begin{align}\label{eq:inter-rel}
&\sum_{i=1}^m\pi_i(t+1)\|\bw_i(t+1) - c\|^2 \cr
&=\sum_{i=1}^m\pi_i(t+1)\sum_{j=1}^m A_{ij}(t)\|\bx_j(t) - c\|^2 -\D(t),
\end{align}
where the decrement $\D(t)$ is given by: for all $t\ge0$,
\begin{equation}\label{eq:dec1}
\D(t)
= \frac{1}{2}\sum_{i=1}^m \pi_i(t+1)\sum_{j=1}^m\sum_{\ell=1}^m A_{ij}(t) A_{j\ell}(t)\|\bx_j(t)-\bx_\ell(t)\|^2
%\qquad\hbox{for all $t\ge0$}.
\end{equation}

Now, we consider the $\bx$-iterates.
By the definition of $\bx_i(t+1)$ in~\eqref{eq:walgo}, we have $\bx_i(t+1)=\Pr_{X_i}[\bw_i(t+1)]$.
Thus, by the  non-expansiveness property
of the projection map $x\mapsto \Pr_{X_i}[x]$
(see~\eqref{eq:non-ex}), we obtain for all $i$, all $t\ge0$, and all $y\in X$
(note $X\subseteq X_i$ for all $i$):
%\begin{equation*}
$\|\bx_i(t+1)-y\|^2\le \|\bw_i(t+1)-y\|^2. $
%\end{equation*}
Therefore, by multiplying with $\pi_i(t+1)$
and then summing over all $i$, and using the definition of $\V$, we see that
\begin{eqnarray}\label{eq:relac1}
\V(t+1,y) \le\sum_{i=1}^m\pi_i(t+1)\|\bw_i(t+1)-y\|^2.
\end{eqnarray}
Letting $c=y$ in~\eqref{eq:inter-rel} and combining the resulting relation with inequality~\eqref{eq:relac1},
we obtain
\begin{eqnarray*}
\V(t+1,y) \le %\sum_{i=1}^m\pi_i(t+1)\|\bw_i(t+1) - y\|^2&=&
\sum_{i=1}^m\pi_i(t+1)\sum_{j=1}^m A_{ij}(t)\|\bx_j(t) - y\|^2 -\D(t).
\end{eqnarray*}
Exchanging the order of summations yields
\begin{eqnarray}\label{eq:relac2}
\V(t+1,y) & \le & \sum_{j=1}^m \left(\sum_{i=1}^m\pi_i(t+1)A_{ij}(t)\right) \|\bx_j(t) - y\|^2 - \D(t)\cr
&= & \sum_{j=1}^m \pi_j(t)\|\bx_j(t) - y\|^2  - \D(t),
\end{eqnarray}
where in the last equality we use $\pi_j(t)=\sum_{i=1}^m\pi_i(t+1)A_{ij}(t)$
(see the adjoint dynamic in~\eqref{eq:adjoint}).
Relation~\eqref{eq:relac2} and the definition of $\V(t,y)$ imply that
\begin{eqnarray}\label{eq:inter-rel1}
\V(t+1,y)
\le\V(t,y) - \D(t)
\ \hbox{for all $t\ge0$ and $y\in X$}.
\end{eqnarray}

It remains to bound the decrement $\D(t)$ in~\eqref{eq:inter-rel1} from below.
We note that the decrement $\D(t)$ defined in~\eqref{eq:dec1}
is a vector analog of the decrement $D(t)$ in Lemma~\ref{lem:Dbound}.
In particular, by defining the decrement $D_\k(t)$ for each coordinate sequence of $\bx_i(t)$,
it can be seen that
\begin{equation}\label{eq:dec-vec}
\D(t)=\sum_{\k=1}^n D_\k(t),\end{equation}
where for each coordinate $\kappa\in[n]$ and for all $t\ge0$,
\begin{equation}\label{eq:dec2}
{\small D_\k (t)=\frac{1}{2}
\sum_{i=1}^m \pi_i(t+1)\sum_{j=1}^m\sum_{\ell=1}^m A_{ij}(t)A_{i\ell}(t)
\left([x_j(t)]_\kappa-[x_\ell(t)]_\k \right)^2.}
\end{equation}
%The properties of the decrements $D_\k(t)$ are to an extent independent of the properties of the sets $X_i$,
%as long as these sets are closed and convex so that the points $\bx_i(t)$ are well defined.
Observe that the bound of Lemma~\ref{lem:Dbound} is valid for each of the decrements $D_\k(t)$,
i.e., for all $\k\in[n]$ and $t\ge0$,
\[D_\k(t)\ge \frac{\delta \b^2}{4 p^*(t)}\,\max_{j,\ell\in [m]} \left([\bx_{j}(t)]_\k - [\bx_{\ell}(t)]_\k \right)^2.\]
%\qquad\hbox{for all $\k\in[n]$ and all $t\ge0$}.\]
By using $p^*(t)\le p^*$ and by summing the resulting inequalities over $\k\in[n]$,
from relations~\eqref{eq:dec-vec} and~\eqref{eq:dec2}
we obtain
\[\D(t)\ge \frac{\delta \b^2}{4 p^*}\,\sum_{\k=1}^n \max_{j,\ell\in [m]}
\left( [\bx_{j}(t)]_\k  - [\bx_{\ell}(t)]_\k \right)^2\quad\hbox{for }t\ge0.\]
By noting that
\[\sum_{\k=1}^n \max_{j,\ell\in [m]} \left( [\bx_{j}(t)]_\k  - [\bx_{\ell}(t)]_\k \right)^2
\ge \max_{j,\ell\in [m]} \left\|\bx_{j}(t)  - \bx_{\ell}(t) \right\|^2,\]
we arrive at the following bound
\begin{eqnarray*}
\D(t)\ge \frac{\delta \b^2}{4 p^*}\,\max_{j,\ell\in [m]} \left\|\bx_{j}(t)  - \bx_{\ell}(t) \right\|^2
\quad\hbox{for }t\ge0,
\end{eqnarray*}
which when combined with relation~\eqref{eq:inter-rel1} yields the stated relation.
\end{proof}

Theorem~\ref{thm:fun} provides the key relation that we use to establish the
convergence of the projection-based consensus algorithm, as seen in the next section.

%-------------------------------------------------------------
\subsection{Convergence and Convergence Rate Results}
%-------------------------------------------------------------
We first show that the algorithm correctly solves the constrained consensus problem.
Then, we investigate the rate of convergence of the algorithm in general case and some special instances.
%------------------------------------------------
\subsubsection{Convergence}\label{sec:conv-cc}
%-------------------------------------------------

The following result proves that the iterates of the algorithm converge to a common point in the set $X$.
\begin{theorem}\label{thm:conv-conprob}
Let Assumption~\ref{assume:minimal} and Assumption~\ref{assume:sets} hold.
Then, the sequences $\{\bx_i(t)\}$, $i\in[m]$, produced by the algorithm~\eqref{eq:walgo}
are bounded, i.e., there is a scalar $\rho>0$ such that
\[\|\bx_i(t)\|\le \rho\qquad\hbox{for all $i\in [m]$ and all $t\ge0$},\]
and they
converge to a common point $x^*\in X$:
\[\lim_{t\to\infty} \bx_i(t)=x^*\qquad\hbox{for some $x^*\in X$ and for all $i\in [m]$.}\]
\end{theorem}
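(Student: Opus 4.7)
The plan is to deduce both conclusions from the monotone decrease property furnished by Theorem~\ref{thm:fun}. I will pick a reference point $\hat y \in X$ (which exists by Assumption~\ref{assume:sets}), use it to establish boundedness, then extract a subsequential limit $x^*\in X$, and finally bootstrap to full convergence by re-applying Theorem~\ref{thm:fun} with $y=x^*$.

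\textbf{Step 1 (Boundedness).} Fix any $\hat y\in X$. Dropping the non-positive decrement term in Theorem~\ref{thm:fun} gives $\V(t+1,\hat y)\le \V(t,\hat y)$, so the nonnegative sequence $\{\V(t,\hat y)\}$ is non-increasing and bounded above by $\V(0,\hat y)$. By Assumption~\ref{assume:minimal}(d), $\pi_i(t)\ge\delta>0$, so
\[\delta\,\|\bx_i(t)-\hat y\|^2\le \V(t,\hat y)\le \V(0,\hat y)\quad\text{for all }i,t,\]
and boundedness of $\{\bx_i(t)\}$ follows.

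\textbf{Step 2 (Agents' disagreement vanishes).} Since $\{\V(t,\hat y)\}$ is monotone and bounded, it converges; hence $\V(t,\hat y)-\V(t+1,\hat y)\to0$. Summing the inequality from Theorem~\ref{thm:fun} yields
\[\frac{\delta\beta^2}{4p^*}\sum_{t=0}^{\infty}\max_{j,\ell\in[m]}\|\bx_j(t)-\bx_\ell(t)\|^2\le \V(0,\hat y)<\infty,\]
so in particular $\max_{j,\ell}\|\bx_j(t)-\bx_\ell(t)\|\to0$ as $t\to\infty$.

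\textbf{Step 3 (Subsequential limit lies in $X$).} By Step 1, the sequence $\{\bx_1(t)\}$ has a convergent subsequence $\bx_1(t_k)\to x^*$ for some $x^*\in\R^n$. Combined with Step 2, $\bx_i(t_k)\to x^*$ for every $i\in[m]$. Because $\bx_i(t)=\Pr_{X_i}[\bw_i(t)]\in X_i$ for all $t\ge1$ and $X_i$ is closed, $x^*\in X_i$ for every $i$, hence $x^*\in X=\bigcap_i X_i$.

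\textbf{Step 4 (Full convergence).} Now apply Theorem~\ref{thm:fun} with $y=x^*\in X$. The sequence $\{\V(t,x^*)\}$ is non-increasing; along the subsequence $\{t_k\}$, $\|\bx_i(t_k)-x^*\|^2\to0$ for each $i$ and $\pi_i(t_k)\le 1$, so $\V(t_k,x^*)\to0$. Monotonicity therefore forces $\V(t,x^*)\to0$, and another appeal to $\pi_i(t)\ge\delta$ gives
\[\|\bx_i(t)-x^*\|^2\le\frac{1}{\delta}\V(t,x^*)\to0\quad\text{for every }i\in[m],\]
which is the desired common limit.

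The only subtle point — and the reason Theorem~\ref{thm:fun} is genuinely needed in two different ways — is that the Lyapunov inequality only holds for reference points inside $X$, so one must first extract a subsequential limit and \emph{prove} it lies in $X$ (using that the post-projection iterates live in the individual sets $X_i$ and that their pairwise differences vanish) before one can bootstrap to convergence of the entire sequence. The boundedness argument also relies critically on the uniform lower bound $\pi_i(t)\ge\delta$ from Assumption~\ref{assume:minimal}(d); without it, one could only bound a weighted combination of $\|\bx_i(t)-\hat y\|$'s.
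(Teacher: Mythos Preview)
Your proof is correct and follows the same overall Lyapunov strategy as the paper: use Theorem~\ref{thm:fun} to get monotonicity of $\V(t,y)$ for $y\in X$, deduce boundedness via $\pi_i(t)\ge\delta$, show the disagreement $\max_{j,\ell}\|\bx_j(t)-\bx_\ell(t)\|$ vanishes by summing the decrement, and conclude that all accumulation points lie in $X$.

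The one genuine difference is in the final step. The paper argues by contradiction: it supposes two distinct accumulation points $\check x,\hat x\in X$ along subsequences $\{t_s\}$ and $\{\tau_s\}$ (with $t_s>\tau_s$), applies the summed inequality between times $\tau_s$ and $t_s$ with $y=\hat x$, and passes to the limit to obtain $\delta\sum_i\|\check x-\hat x\|^2\le 0$. Your argument is the direct Fej\'er-monotonicity route: once a single subsequential limit $x^*\in X$ is identified, you observe that $\{\V(t,x^*)\}$ is nonincreasing and has a subsequence tending to zero, hence the whole sequence tends to zero. Both are standard, but yours is a bit cleaner and avoids juggling two interleaved subsequences; the paper's version, on the other hand, never needs to verify that $\V(t_k,x^*)\to 0$ (which requires the harmless but extra observation $\pi_i(t_k)\le 1$).
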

%\todo{statement has to have a result to connect with the rate: ball for the iterates}

\begin{proof}
We use Theorem~\ref{thm:fun}, where we
let $\tau$ and $T$ be arbitrary times with $T>\tau\ge0$.
By summing the relations given in Theorem~\ref{thm:fun} over $t=\tau,\ldots, T-1$,
we obtain for all $y\in X$ and all $T>\tau\ge0$,
\begin{eqnarray}\label{eq:mr1}
\V(T,y)
\le \V(\tau,y)
- \frac{\delta \b^2}{4 p^*}\,\sum_{t=\tau}^{T-1}\max_{j,\ell\in [m]} \left\|\bx_{j}(t)  - \bx_{\ell}(t) \right\|^2.
\end{eqnarray}

Based on relation~\eqref{eq:mr1}, we first show that each sequence $\{\bx_i(t)\}$ is bounded.
By the definition of $\V(t,y)$, from~\eqref{eq:mr1} it follows that for all $y\in X$ and $T>\tau\ge0$,
\begin{align}\label{eq:mr2}
\sum_{i=1}^m \pi_i(T)\|\bx_i(T) & - y\|^2
\le  \sum_{j=1}^m \pi_j(\tau)\|\bx_j(\tau) - y\|^2\cr
 & - \frac{\delta \b^2}{4 p^*}\,\sum_{t=\tau}^{T-1}\max_{j,\ell\in [m]} \left\|\bx_{j}(t)  - \bx_{\ell}(t) \right\|^2.
 \quad
\end{align}
Letting $\tau=0$ and dropping the non-negative terms in~\eqref{eq:mr2},
we find that for all $y\in X$ and all $T>0$,
\[\sum_{i=1}^m \pi_i(T)\|\bx_i(T) - y\|^2
\le \sum_{j=1}^m \pi_j(0)\|\bx_j(0) - y\|^2.\]
By letting $y\in X$ be arbitrary but fixed and using the fact that the adjoint sequence $\{\pi(t)\}$
is uniformly bonded  away from zero (cf.~Assumption~\ref{assume:minimal}(d)), we conclude that
each sequence $\{\bx_i(t)\}$ is bounded, i.e., there is a scalar $\rho>0$ such that
\[\|\bx_i(t)\|\le \rho\qquad\hbox{for all $i\in [m]$ and all $t\ge0$},\]
where $\rho$ depends on $\pi(0)$, the initial points $\bx_i(0),i\in [m],$ the parameter $\delta$
and the chosen point $y\in X$.

Thus, every sequence $\{\bx_i(t)\}$ has accumulation points. We next show that
all the accumulation points of these sequences coincide, i.e.,
\begin{equation}\label{eq:same-ap}
\lim_{t\to\infty}\|\bx_i(t) - \bx_j(t)\|=0\qquad\hbox{for all $i,j\in [m]$}.
\end{equation}
This follows from~\eqref{eq:mr2}, where by letting $\tau=0$
and using non-negativity of $\V(T,y)$ we find that for all $T>0,$
\[\frac{\delta \b^2}{4 p^*}\,\sum_{t=0}^{T-1}\max_{j,\ell\in [m]} \left\|\bx_{j}(t)  - \bx_{\ell}(t) \right\|^2
\le \sum_{j=1}^m \pi_j(0)\|\bx_j(0) - y\|^2 .\]
Therefore, by letting $T\to\infty$ we conclude that
the sequences $\{\bx_j(t)\}$ have the same accumulation points (i.e.,~\eqref{eq:same-ap} is valid).
Since each sequence $\{\bx_i(t)\}$ lies in the set $X_i$ and each set $X_i$ is closed,
it follows the accumulation points of each $\{\bx_i(t)\}$ lie in the set $X_i$.
Furthermore, since the accumulation points are the same for all of the sequences $\{\bx_i(t)\}$, $i\in [m]$,
the accumulation points must be in the intersection of the sets $X_i$, i.e., in the set $X$.

Finally, we show that the sequences  $\{\bx_j(t)\}$ can have only one accumulation point, thus
showing that they converge to a common point in the set $X$.
To prove this, we argue by contraposition. Suppose that there are two accumulation points
for the sequences $\{\bx_i(t)\}$, $i\in [m]$.
Let $\{t_s\}$ and $\{\tau_s\}$ be the time sequences along which the iterates $\{\bx_i(t)\}$ converge, respectively,
to two distinct points, say $\check{x}\in X$ and $\hat x\in X$, with $\check{x}\ne\hat x$,
\begin{equation}\label{eq:mr3}
\lim_{s\to\infty} \bx_i(t_s) =\check{x},\quad \lim_{s\to\infty} \bx_i(\tau_s)=\hat x,
\qquad\hbox{for all }i\in V.
\end{equation}
Without loss of generality let us assume that $t_s>\tau_s$ for all $s\ge 1$ (for otherwise we can construct such subsequences from $\{t_s\}$ and $\{\tau_s\}$).
In relation~\eqref{eq:mr2}, we let $T=t_s$ and $\tau=\tau_s$ for any $s\ge 1$, and thus, obtain
(by omitting the non-negative terms) for all $y\in X$,
\[\sum_{i=1}^m \pi_i(t_s)\|\bx_i(t_s) - y\|^2
\le \sum_{j=1}^m \pi_j(\tau_s)\|\bx_j(\tau_s) - y\|^2 \ \hbox{for all }s\ge1.\]
Letting $y=\hat x$ and recalling that the adjoint sequence $\{\pi(t)\}$ is bounded away from 0,
we see that
\[\delta \sum_{i=1}^m \|\bx_i(t_s) - \hat x\|^2
\le \sum_{j=1}^m \pi_j(\tau_s)\|\bx_j(\tau_s) - \hat x\|^2 \qquad\hbox{for all }s\ge1.\]
Now, letting $s\to\infty$  we have
\begin{align*}
\delta \lim_{s\to\infty} \left( \sum_{i=1}^m \|\bx_i(t_s) - \hat x\|^2 \right)
& \le  \lim_{s\to\infty} \left(\sum_{j=1}^m \pi_j(\tau_s)\|\bx_j(\tau_s) - \hat x\|^2 \right)\cr
& \le  \sum_{j=1}^m \lim_{s\to\infty} \|\bx_j(\tau_s) - \hat x\|^2 ,\end{align*}
where in the last inequality we use $0\le \pi_j(t)\le 1$ for all $j$ and $t$.
From relation~\eqref{eq:mr3} it follows that
\[\delta \sum_{i=1}^m \|\check{x} - \hat x\|^2 \le 0,\]
thus implying  $\check{x}=\hat x$, which is a contradiction.
Hence, the sequences $\{\bx_i(t)\}$, $i\in [m]$, must be convergent.
\end{proof}

Theorem~\ref{thm:conv-conprob} shows that Proposition 2 in~\cite{NOP2010} holds under weaker
assumptions on the graphs and the weights.
At first, the requirement in~\cite{NOP2010} that each matrix $A(t)$ is doubly stochastic is relaxed.
At second, while here we assume that each of the graphs $G_t$ is rooted,
the results easily extend to the case studied in~\cite{NOP2010} by assuming that
the graphs are rooted over at most $B$ units of time and
that the absolute probability sequence exists for such unions of the graphs.

%--------------------------------------------------------------
\subsubsection{Convergence Rate}\label{sec:conv-cc}
%-------------------------------------------------------------
Our convergence rate results are obtained for sets $X_i$ that satisfy a certain regularity condition
which relates the distances from a given point to the sets $X_\ell$
with the distance from the point to the intersection set $X=\cap_{i=1}^m X_i$.
One relation that among these distances always holds. In particular, since $X\subseteq X_i$ for all $i$,
it follows that
\begin{equation}\label{eq:crit-bel}
\dist(x,X_i)\le \dist(x,X) \ \hbox{for all $x\in\rn$ and $i\in [m]$}.\end{equation}
In our analysis, we need
an upper bound on $\dist(x,X)$ in terms of the distances $\dist(x,X_i)$, $i\in [m]$.
A related generic question is:
when the distances of a given point $y$ to a collection of closed convex sets $\{Y_i,i\in\cal{I}\}$
can be related to the distance of $y$ from the intersection set $Y=\cap_{i\in \cal I} Y_i\ne\emptyset$?
This question has been studied in the
optimization literature within the terminology of {\it error bounds} or {\it metric regularity}.
In this literature, loosely speaking, the question is when the distance $\dist(y,Y)$ is bounded from
above by a constant factor of the maximum distance $\max_{i\in\cal{I}}\dist(y,Y_i)$.
In general, the index set $\cal{I}$ can be infinite, but we restrict our attention to finite index sets only.

We will use the following definition of set regularity.
\begin{definition}\label{def:reg-set}
Let $Z\subseteq\R^n$ be a nonempty set.
We say that a (finite) collection of closed convex sets $\{Y_i,i\in\cal{I}\}$ is regular (in Euclidian norm)
with respect to the set $Z$,
if there is a constant $r\ge1$ such that
\[\dist(y,Y)\le r\, \max_{i\in\cal{I}} \left\{\dist(y,Y_i)\right\}\qquad\hbox{for all $y\in Z$}.\]
We refer to the scalar $r$ as a {\it regularity constant}.
When the preceding relation holds with $Z=\rn$, we say that the sets $\{Y_i,i\in\cal{I}\}$
are {\it uniformly regular}.
\end{definition}
In view of relation~\eqref{eq:crit-bel} it follows that the regularity constant $r$ must satisfy
$r\ge1.$
Note that the regularity constant $r$ in Definition~\ref{def:reg-set} depends on the set $Z$.
It also depends on the choice of the metric and the geometry of the sets $\{Y_i,i\in\cal{I}\}$.
In general, it is hard to compute $r$, but our algorithm does not require the knowledge of such a constant.
We just provide a convergence rate result that captures the dependence on $r$.

In view of Theorem~\ref{thm:conv-conprob}, the iterate sequences $\{\bx_i(t)\}$, $i\in[m]$,
are contained a ball $B(0,\rho)$  centered at the origin with a radius $\rho$.
We will assume that the sets $X_\ell$ are regular with respect to the ball $B(0,\rho)$.
Later in Section~\ref{sec:set-reg} we discuss some sufficient conditions for this regularity assumption to hold.
Under such a regularity assumption,
we show a result that is critical in the subsequent convergence rate analysis.
\begin{lemma}\label{lem:crit}
Let Assumption~\ref{assume:sets} hold.
Assume further that the sets $\{X_i,i\in [m]\}$ are regular with respect to a set $Z\subseteq\R^n$
with a regularity constant~$r\ge1$, and assume that
$\left(X_1\times\cdots\times X_m\right)
\cap \left(Z\times\cdots\times Z\right)\ne\emptyset.$
Let $\phi\in\R^m$ be a given stochastic vector.
Then, for all
$(\bx_1,\ldots,\bx_m)\in \left(X_1\times\cdots\times X_m\right)
\cap \left(Z\times\cdots\times Z\right)$ we have
\[\max_{j,\ell\in [m]}\|\bx_j - \bx_\ell\|
\ge \frac{1}{r+1}\,\max_{p\in [m]} \left\|\bx_p-\Pr_X\left[ \sum_{i=1}^m \phi_i \bx_i\right]\right\|.\]
\end{lemma}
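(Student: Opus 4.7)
The proof proposal is a short triangle-inequality argument built around the regularity hypothesis. Write $\bar{\bx}\triangleq\sum_{i=1}^m\phi_i\bx_i$. The plan has two quick bounds feeding into a final triangle-inequality step.

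First I would control, for each $i\in[m]$, the distance $\dist(\bar{\bx},X_i)$. Since $\bx_i\in X_i$, one has $\dist(\bar{\bx},X_i)\le\|\bar{\bx}-\bx_i\|$. Because $\phi$ is stochastic, $\bar{\bx}-\bx_i=\sum_{j=1}^m\phi_j(\bx_j-\bx_i)$, so by convexity of the norm
\[\|\bar{\bx}-\bx_i\|\le\sum_{j=1}^m\phi_j\|\bx_j-\bx_i\|\le\max_{j,\ell\in[m]}\|\bx_j-\bx_\ell\|.\]
Hence $\dist(\bar{\bx},X_i)\le\max_{j,\ell\in[m]}\|\bx_j-\bx_\ell\|$ uniformly in $i$.

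Second, I would invoke regularity of the collection $\{X_i,i\in[m]\}$ with respect to $Z$. This needs $\bar{\bx}\in Z$, which holds because each $\bx_i\in Z$ and $Z$ is convex in the intended application (the ball $B(0,\rho)$ of Theorem~\ref{thm:conv-conprob}); I would note this requirement explicitly. Regularity then gives
\[\dist(\bar{\bx},X)\le r\,\max_{i\in[m]}\dist(\bar{\bx},X_i)\le r\,\max_{j,\ell\in[m]}\|\bx_j-\bx_\ell\|.\]

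Finally, for any $p\in[m]$, the triangle inequality together with $\|\bar{\bx}-\Pr_X[\bar{\bx}]\|=\dist(\bar{\bx},X)$ yields
\[\|\bx_p-\Pr_X[\bar{\bx}]\|\le\|\bx_p-\bar{\bx}\|+\dist(\bar{\bx},X)\le(1+r)\max_{j,\ell\in[m]}\|\bx_j-\bx_\ell\|.\]
Taking the maximum over $p\in[m]$ and dividing by $r+1$ gives the claim. The only subtle point — which I would flag but not grind through — is ensuring $\bar{\bx}\in Z$ so that the regularity inequality actually applies; everything else reduces to elementary norm bounds and the non-expansiveness of the projection packaged into $\dist(\bar{\bx},X)$.
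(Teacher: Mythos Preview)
Your argument is correct provided $Z$ is convex, and you correctly flag that assumption. However, the lemma as stated does not require $Z$ to be convex, so as a proof of the lemma exactly as written there is a gap: you apply the regularity inequality at the averaged point $\bar{\bx}$, and nothing in the hypotheses forces $\bar{\bx}\in Z$.

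The paper's proof sidesteps this by applying regularity at each individual point $\bx_\ell$ --- which \emph{is} in $Z$ by hypothesis --- rather than at $\bar{\bx}$. Concretely, it decomposes
\[
\|\bx_\ell-\Pr_X[u]\|\le \|\bx_\ell-\Pr_X[\bx_\ell]\|+\|\Pr_X[\bx_\ell]-\Pr_X[u]\|,
\]
bounds the first term by $\dist(\bx_\ell,X)\le r\max_j\dist(\bx_\ell,X_j)\le r\max_j\|\bx_\ell-\bx_j\|$ (using regularity at $\bx_\ell\in Z$ and $\bx_j\in X_j$), and bounds the second term by $\|\bx_\ell-u\|\le\max_j\|\bx_\ell-\bx_j\|$ via non-expansiveness of the projection map. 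The rest is the same as yours.

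So the two routes differ only in \emph{where} regularity is invoked: you invoke it once at the barycenter, the paper invokes it at each vertex and pays for that with one extra use of projection non-expansiveness. Your version is arguably cleaner when $Z$ is convex (which covers every use in the paper, since $Z$ is always a ball there), while the paper's version proves the lemma in the generality actually stated.
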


\begin{proof}
Let $(\bx_1,\ldots,\bx_m)\in \left(X_1\times\cdots\times X_m\right)
\cap \left(Z\times\cdots\times Z\right)$ be arbitrary,
and define $u=\sum_{i=1}^m \phi_i \bx_i$.
Let $\ell\in [m]$ be arbitrary. Consider estimating $\|\bx_\ell -\Pr_X[u]\|$ as follows:
\begin{eqnarray*}
\|\bx_\ell - \Pr_X[u]\|
&\le& \|\bx_\ell - \Pr_X[\bx_\ell]\| + \|\Pr_X[\bx_\ell] - \Pr_X[u]\|\cr
&\le & r\max_{j\in [m]}\left\{ \dist(\bx_\ell,X_j)\right\} + \|\bx_\ell - u\|.
\end{eqnarray*}
where the first inequality uses the triangle inequality for the norm. The second inequality
uses the fact $\|\bx_\ell - \Pr_X[\bx_\ell]\|=\dist(\bx_\ell,X)$ and the set regularity assumption
for the first term (i.e., $\dist(y,X)\le r\max_{i}\dist(y,X_i)$ for all $y\in Z$ and the fact $\bx_\ell \in Z$),
while the second term is estimated by using the non-expansiveness property of the projection map
(see~\eqref{eq:non-ex}).
By the definition of the projection, we have
\[\dist(\bx_\ell,X_j)=\min_{y\in X_j}\|\bx_\ell - y\|\le \|\bx_\ell - \bx_j\|,\]
where the inequality follows by $\bx_j\in X_j$ for all $j$.
Thus,
\begin{equation}\label{eq:crit1}
\|\bx_\ell -\Pr_X[u]\| \le  r\max_{j\in [m]}\|\bx_\ell - \bx_j\| + \|\bx_\ell - u\|.
\end{equation}
Consider now the term $\|\bx_\ell-u\|$. By the definition of $u$, this vector is a convex combination of
points $\bx_i,i\in [m],$ since $\phi$ is a stochastic vector.
Thus, by the convexity of the Euclidean norm,  it follows that
\[\|\bx_\ell -u\|= \left\|\sum_{i=1}^m \phi_i(\bx_\ell-x_i) \right\|\le \sum_{i=1}^m \phi_i\|\bx_\ell-\bx_i\|
\le \max_{i\in [m]}\|\bx_\ell - \bx_i\|.\]
By substituting the preceding estimate in relation~\eqref{eq:crit1}, we obtain
\[\|\bx_\ell-\Pr_X[u]\| \le  (r+1)\max_{j\in [m]}\|\bx_\ell - \bx_j\|.\]
So far the index  $\ell$ was arbitrary, so by taking the maximum over all $\ell\in [m]$,
we find that
\[\max_{\ell\in [m]} \|\bx_\ell - \Pr_X[u]\| \le  (r+1)\max_{j,\ell\in [m]}\|\bx_\ell - \bx_j\|,\]
and the desired relation follows after dividing by $r+1$.
\end{proof}

With Lemma \ref{lem:crit} in place, we investigate the rate of decrease of the Lyapunov comparison function
$\V(t,y)$, as
given in~\eqref{eq:funv}. We have the following result.

\begin{theorem}\label{thm:rate2}
Let Assumption~\ref{assume:minimal} and Assumption~\ref{assume:sets} hold.
Assume further that the sets $\{X_i,i\in [m]\}$ are regular,
with a regularity constant~$r\ge1$, with respect to a ball $B(0,\rho)$ which contains all the iterates
$\{\bx_i(t)\}$ generated by the algorithm~\eqref{eq:walgo}.
Consider the following vectors
\begin{equation}\label{eq:def-uv}
\bu(t)=\sum_{i=1}^m \pi_i(t) \bx_i(t), \  \bv(t)=\Pr_X[u(t)], \ \hbox{for all }t\ge0.
\end{equation}
Then, the Lyapunov comparison function $\V(t,\bv(t))$ decreases at a geometric rate:
for all $t\ge0$,
\begin{eqnarray*}
\V\left(t+1,\bv(t+1)\right)
\le \left(1- \frac{\delta \b^2}{4 p^* (r+1)^2}\right) \,\V(t,\bv(t)),
\end{eqnarray*}
where the scalars $\delta,\beta\in (0,1)$ and the integer $p^*\ge 1$
are the same as in Theorem~\ref{thm:fun}.
\end{theorem}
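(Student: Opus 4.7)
The plan is to combine the one-step decrease established in Theorem~\ref{thm:fun} with the set-regularity lower bound in Lemma~\ref{lem:crit}. The main obstacle is that Theorem~\ref{thm:fun} only controls $\V(t+1,y)$ for a \emph{fixed} $y\in X$, whereas the natural target $\bv(t+1)$ on the left-hand side of the desired inequality moves with $t$; so my first step would be to replace $\bv(t+1)$ by $\bv(t)$ at no cost. For this I would record the identity
\[
\V(t+1,y)=\sum_{i=1}^m\pi_i(t+1)\|\bx_i(t+1)-\bu(t+1)\|^2+\|y-\bu(t+1)\|^2,
\]
obtained by adding and subtracting $\bu(t+1)$ inside each squared norm and using $\sum_i\pi_i(t+1)=1$. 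Since only the second term depends on $y$, the map $y\mapsto\V(t+1,y)$ is minimized over $y\in X$ precisely at $y^\star=\Pr_X[\bu(t+1)]=\bv(t+1)$, so $\V(t+1,\bv(t+1))\le\V(t+1,\bv(t))$ because $\bv(t)\in X$.

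With that reduction in place, I would apply Theorem~\ref{thm:fun} with the fixed admissible point $y=\bv(t)\in X$ to obtain
\[
\V(t+1,\bv(t))\le\V(t,\bv(t))-\frac{\delta\beta^2}{4p^*}\,\max_{j,\ell\in[m]}\|\bx_j(t)-\bx_\ell(t)\|^2.
\]
The remaining task is to express the max pairwise distance as a constant multiple of $\V(t,\bv(t))$. I would invoke Lemma~\ref{lem:crit} with stochastic vector $\phi=\pi(t)$ and $Z=B(0,\rho)$; the hypothesis that every iterate lies in $B(0,\rho)\cap X_i$ is exactly what the lemma requires, and $\Pr_X[\sum_i\pi_i(t)\bx_i(t)]=\Pr_X[\bu(t)]=\bv(t)$, so the lemma yields
\[
\max_{j,\ell\in[m]}\|\bx_j(t)-\bx_\ell(t)\|\ge\frac{1}{r+1}\max_{p\in[m]}\|\bx_p(t)-\bv(t)\|.
\]
Squaring this and using the trivial weighted-average bound
\[
\V(t,\bv(t))=\sum_{i=1}^m\pi_i(t)\|\bx_i(t)-\bv(t)\|^2\le\max_{p\in[m]}\|\bx_p(t)-\bv(t)\|^2,
\]
which is valid because $\pi(t)$ is stochastic, I obtain $\max_{j,\ell}\|\bx_j(t)-\bx_\ell(t)\|^2\ge(r+1)^{-2}\V(t,\bv(t))$.

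Substituting this lower bound into the Theorem~\ref{thm:fun} inequality and then chaining with $\V(t+1,\bv(t+1))\le\V(t+1,\bv(t))$ produces exactly the claimed geometric decrease with ratio $1-\delta\beta^2/(4p^*(r+1)^2)$. The conceptual crux, and the only step that is not entirely routine, is the initial observation that $\bv(t+1)$ is by definition the minimizer of $\V(t+1,\cdot)$ over $X$; this decouples the moving-target problem from the contraction argument and lets Theorem~\ref{thm:fun} be applied in its original fixed-$y$ form. The remaining algebra is a direct chaining of Theorem~\ref{thm:fun}, Lemma~\ref{lem:crit}, and a Jensen-type bound.
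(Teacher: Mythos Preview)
Your proof is correct and follows the same overall architecture as the paper: apply Theorem~\ref{thm:fun} at $y=\bv(t)$, show $\V(t+1,\bv(t+1))\le\V(t+1,\bv(t))$, then invoke Lemma~\ref{lem:crit} and the stochastic-weight bound to close the loop. The only notable difference is in how the monotonicity step $\V(t+1,\bv(t+1))\le\V(t+1,\bv(t))$ is obtained: the paper expands $\|\bx_i(t+1)-\bv(t+1)+(\bv(t+1)-\bv(t))\|^2$ and uses the projection variational inequality~\eqref{eq:optimalp} to control the cross term, whereas your bias--variance identity $\V(t+1,y)=\sum_i\pi_i(t+1)\|\bx_i(t+1)-\bu(t+1)\|^2+\|y-\bu(t+1)\|^2$ makes the minimization over $y\in X$ immediate from the definition of $\bv(t+1)$ as the nearest point in $X$ to $\bu(t+1)$; this is a slightly cleaner route to the same inequality.
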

\begin{proof}
In Theorem~\ref{thm:fun} we let $y=\bv(t)$ with $\bv(t)\in X$ and we use the
definition of $\bu(t)$.
Then, we have for all $t\ge0$,
\begin{align}\label{eq:rater1}
\V\left(t+1,\bv(t)\right)
\le
\V\left(t,\bv(t) \right)) - \frac{\delta \b^2}{4 p^*}\,\max_{j,\ell\in [m]} \left\|\bx_{j}(t)  - \bx_{\ell}(t) \right\|^2.
\end{align}
Next, we consider the term $V\left(t+1,\bv(t)\right)$. We have
\begin{eqnarray*}
&& \V\left(t+1,\bv(t)\right)
= \sum_{i=1}^m\pi_i(t+1)\|\bx_i(t+1)-\bv(t)\|^2\cr
&= &\sum_{i=1}^m\pi_i(t+1)\left\|\bx_i(t+1)-\bv(t+1) + \left(\bv(t+1) -\bv(t)\right) \right\|^2.
\end{eqnarray*}
By expanding the squared-norm terms, we obtain
\begin{eqnarray*}
&&\V\left(t+1,\bv(t)\right)
%&=&  \sum_{i=1}^m\pi_i(t+1)\|\bx_i(t+1)-\bv(t+1) \|^2 + \left\|\bv(t+1) -\bv(t) \right\|^2\cr
%&&+\sum_{i=1}^m\pi_i(t+1)\left(\bx_i(t+1)-\bv(t+1) \right)'\left(\bv(t+1) -\bv(t)\right)\cr
\ge \sum_{i=1}^m\pi_i(t+1)\|\bx_i(t+1)-\bv(t+1) \|^2 \cr
&& + 2\left(\sum_{i=1}^m\pi_i(t+1)\bx_i(t+1)-\bv(t+1) \right)'\left(\bv(t+1) -\bv(t)\right),
\end{eqnarray*}
where the inequality is obtained by dropping the term
$\|\bv(t+1) -\bv(t)\|^2$. % and by using $\sum_{i=1}^m \pi_i(t+1)=1$.
In view of the definition of the vector $\bu(t+1)$ (cf.~\eqref{eq:def-uv}), it follows that
\begin{align*}
\V\left(t+1,\bv(t)\right)
 & =   \sum_{i=1}^m\pi_i(t+1)\|\bx_i(t+1)-\bv(t+1) \|^2 \cr
& \ +2\left(\bu(t+1)-\bv(t+1) \right)'\left(\bv(t+1) -\bv(t)\right),
\end{align*}
Since $\bv(t+1)$ is the projection of $\bu(t+1)$ on the set $X$
and since $\bv(t)\in X$, it further follows that
\[\left(\bu(t+1)-\bv(t+1) \right)'\left(\bv(t+1) -\bv(t)\right)\ge0\]
(see relation~\eqref{eq:optimalp}).
Hence
\begin{align*}
\V\left(t+1,\bv(t)\right)
& \ge  \sum_{i=1}^m\pi_i(t+1)\|\bx_i(t+1)-\bv(t+1) \|^2 \cr
&=\V(t+1,\bv(t+1)).
%+ \left\|\bv(t+1) -\bv(t) \right\|^2.
\end{align*}
By combining the preceding relation with~\eqref{eq:rater1}
we can conclude that  for all $t\ge0$,
\begin{eqnarray}\label{eq:rater2}
\V\left(t+1,\bv(t+1)\right)
%+\left\|\bv(t+1) -\bv(t) \right\|^2 +\left\|\bu(t+1) -\bu(t) \right\|^2\cr
\le
\V\left(t,\bv(t) \right)) - \frac{\delta \b^2}{4 p^*}\,\max_{j,\ell\in [m]} \left\|\bx_{j}(t)  - \bx_{\ell}(t) \right\|^2.
\end{eqnarray}

To estimate the term $\max_{j,\ell\in [m]} \left\|\bx_{j}(t)  - \bx_{\ell}(t) \right\|^2$ from below
we use Lemma~\ref{lem:crit} with the following identification: $Z=B(0,\rho)$,
$\bx_i=\bx_i(t)$, $\phi=\pi(t)$ and $u=\bu(t)$, and we note that $\bx_i(t)\in Z$ for all $i$ and $t$.
Thus, by Lemma~\ref{lem:crit} we have
\[\max_{j,\ell\in [m]}\|\bx_j(t) - \bx_\ell(t)\|\ge \frac{1}{r+1}\,\max_{p\in [m]}\|\bx_p(t)-\Pr_X[\bu(t)]\|.\]
In our notation, we have $\bv(t)=\Pr_X[\bu(t)]$ (see~\eqref{eq:def-uv}), so by using
$\bv(t)$ and by taking squares
in the preceding relation we obtain
\[\max_{j,\ell\in [m]}\|\bx_j(t) - \bx_\ell(t)\|^2
\ge \frac{1}{(r+1)^2}\,\max_{p\in [m]}\|\bx_p(t) - \bv(t)\|^2.\]
Since the vector $\pi(t)$ is stochastic, we have
\[\max_{p\in [m]}\|\bx_p(t) - \bv(t)\|^2\ge \sum_{i=1}^m \pi_i(t) \|\bx_i(t) - \bv(t)\|^2 =\V(t,\bv(t)),\]
where the equality uses the definition of $\V(t,y)=\sum_{i=1}^m \pi_i(t) \|\bx_i(t) - y\|^2$
(see~\eqref{eq:funv}).
Therefore
\begin{equation}\label{eq:rater3}
\max_{j,\ell\in [m]}\|\bx_j(t) - \bx_\ell(t)\|^2\ge \frac{1}{(r+1)^2}\V(t,\bv(t)).
\end{equation}
By substituting the estimate~\eqref{eq:rater3} into inequality~\eqref{eq:rater2} we obtain
the desired relation.
%\begin{eqnarray*}
%\V\left(t+1,\bv(t+1)\right)
%\le
%%\V\left(t,\bv(t) \right)) - \frac{\delta \b^2}{4 p^* (r+1)^2}\,\V(t,\bv(t))\cr&= &
%\left(1- \frac{\delta \b^2}{4 p^* (r+1)^2}\right) \,\V(t,\bv(t)).
%\end{eqnarray*}
\end{proof}

Using the decrease rate result for the Lyapunov comparison function $\V(t,y)$ of
Theorem~\ref{thm:rate2}, and the properties of the adjoint dynamics,
we can now estimate the rate of convergence of the iterates $\{\bx_i(t)\}$.

%Under the set regularity, we have the following convergence rate result.
\begin{theorem}\label{thm:rate3}
Let Assumption~\ref{assume:minimal} and Assumption~\ref{assume:sets} hold.
Assume further that the sets $\{X_i,i\in [m]\}$ are regular, with a regularity constant~$r\ge1$,
with respect to a ball $B(0,\rho)$ which contains all the iterates
$\{\bx_i(t)\}$ generated by the algorithm~\eqref{eq:walgo}.
Then, the sequences $\{\bx_i(t)\}$, $i\in[m],$ are such that for all $t\ge0,$
\[\sum_{j=1}^m \dist^2\left(\bx_j(t), X\right)
\le \frac{1}{\delta} \left(1- \frac{\delta \b^2}{4 p^* (r+1)^2}\right)^t \,\V(0,\bv(0)),\]
where $\bv(0)=\Pr_X[\bu(0)]$ with $\bu(0)=\sum_{j=1}^m\pi_j(0) \bx_j(0)$, while the scalars
$\delta,\beta\in (0,1)$ and the integer $p^*\ge 1$ are the same as in Theorem~\ref{thm:fun}.
\end{theorem}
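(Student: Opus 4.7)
The plan is to derive Theorem~\ref{thm:rate3} as an essentially immediate corollary of Theorem~\ref{thm:rate2}, by iterating the one-step geometric decrease of the Lyapunov comparison function $\V(t,\bv(t))$ and then relating this function to the sum of squared distances $\sum_j \dist^2(\bx_j(t), X)$.

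First, I would iterate the bound from Theorem~\ref{thm:rate2}. Since Theorem~\ref{thm:rate2} gives
\[
\V(t+1,\bv(t+1)) \le \left(1-\tfrac{\delta \beta^2}{4 p^* (r+1)^2}\right)\V(t,\bv(t))
\qquad\text{for all }t\ge 0,
\]
a straightforward induction on $t$ yields
\[
\V(t,\bv(t)) \le \left(1-\tfrac{\delta \beta^2}{4 p^* (r+1)^2}\right)^{t}\V(0,\bv(0)).
\]

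Next I would lower-bound $\V(t,\bv(t))$ by $\delta\sum_{j=1}^m\dist^2(\bx_j(t),X)$. By the definition of $\V$ in~\eqref{eq:funv} and Assumption~\ref{assume:minimal}(d), which gives $\pi_i(t)\ge\delta$ for all $i,t$, we have
\[
\V(t,\bv(t)) = \sum_{i=1}^m \pi_i(t)\,\|\bx_i(t)-\bv(t)\|^2 \ge \delta\sum_{i=1}^m \|\bx_i(t)-\bv(t)\|^2.
\]
Since $\bv(t)=\Pr_X[\bu(t)]\in X$, for each $i$ the definition of $\dist(\cdot,X)$ yields
$\dist(\bx_i(t),X)\le \|\bx_i(t)-\bv(t)\|$, hence
\[
\V(t,\bv(t)) \ge \delta\sum_{i=1}^m \dist^2(\bx_i(t),X).
\]

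Combining the two displays and dividing by $\delta$ gives exactly the stated bound
\[
\sum_{j=1}^m \dist^2(\bx_j(t),X) \le \frac{1}{\delta}\left(1-\tfrac{\delta\beta^2}{4p^*(r+1)^2}\right)^{t}\V(0,\bv(0)).
\]
There is no real obstacle here: the technical content has already been absorbed into Theorem~\ref{thm:rate2} (geometric decrease of $\V(t,\bv(t))$) and the uniform lower bound $\pi_i(t)\ge\delta$ from Assumption~\ref{assume:minimal}(d). The only subtlety worth noting is that the iterates remain in $B(0,\rho)$ so that the regularity hypothesis used to invoke Theorem~\ref{thm:rate2} at every step is valid for every $t\ge 0$; this is guaranteed by the hypothesis of the theorem.
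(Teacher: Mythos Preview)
Your proposal is correct and follows essentially the same approach as the paper: iterate the one-step geometric decrease from Theorem~\ref{thm:rate2}, then use $\pi_i(t)\ge\delta$ from Assumption~\ref{assume:minimal}(d) together with $\bv(t)\in X$ to pass from $\V(t,\bv(t))$ to $\delta\sum_j\dist^2(\bx_j(t),X)$. Your write-up is in fact slightly more explicit than the paper's, which merely gestures at the last step.
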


\begin{proof}
From Theorem~\ref{thm:rate2} it can be seen that
$\V\left(t,\bv(t)\right)
\le \left(1- \frac{\delta \b^2}{4 p^* (r+1)^2}\right)^t \,\V(0,\bv(0))$
for all $t\ge0.$
The result follows by recalling that
$\V(t,y)=\sum_{i=1}^m \pi_i(t)\|\bx_i(t) - y\|^2$, recalling the definition of $\bv(t)$ (see~\eqref{eq:def-uv}),
and using the fact that the vectors $\pi(t)$ have uniformly bounded entries from below by $\delta>0$
(cf.~Assumption~\ref{assume:minimal}(d)).
\end{proof}

Theorem~\ref{thm:rate3} extends the convergence rate result obtained originally in~\cite{NOP2010},
where the convergence rate was analyzed for a special case when the matrices $A(t)$ are doubly stochastic, and
the graph is static and complete, i.e., $A(t)=\frac{1}{m}\1\1'$ for all $t$.

%------------------------------------------------------------------------
\subsubsection{Sufficient Conditions for Set Regularity}\label{sec:set-reg}
%------------------------------------------------------------------------
We discuss two cases of sufficient conditions for the set regularity property, namely,
the case of a polyhedral set $X$, and the case of $X$ with a nonempty interior.\\

%--------------------------------------------------------------
\noindent{\bf Polyhedral Set $X$}.\quad
%---------------------------------------------------------------
Let $X\subseteq\R^n$ be a nonempty polyhedral set.
We will show that use the description of $X$ in terms of
linear inequalities,
\[X=\{x\in\rn \mid a_i'x\le b_i, \ i\in\cal I\},\]
where $\cal I$ is a finite index set, $a_i\in \rn$ and $b_i\in\re$ for all $i$.
For such a set, Hoffman in~\cite{Hoffman1952} had shown that the distance from
any point $x\in\rn$ to the set $X$ is bounded from above by the maximal distance from $x$ to any of the
hyperplanes defined by the linear inequalities, i.e.,
that there exists a constant
$r\ge1$ such that
\begin{equation}\label{eq:h-bound}
\dist(x,X)\le r\max_{i\in{\cal I} }\left\{ \dist (x,H_i)\right\} \qquad\hbox{for all }x\in\rn,
\end{equation}
where, for every $i$, the set $H_i$ is the hyperplane given by
$H_i=\{x\in\rn\mid a_i'x \le b_i\}$, while the constant $r$ depends
on the set of normals $\{a_i, i\in\cal I\}$ that define the hyperplanes $\{H_i, i\in\cal I\}$.
We will refer to this relation as the {\it  Hoffman bound}.
We will use this bound to show that,
when each set $X_i$ is polyhedral, the sets $X_i$ are uniformly regular.

\begin{proposition}\label{prop:lin-sets}
       Assume that each set $X_j$, $j\in [m]$, is given by
      $X_j=\{x\in\rn\mid (a_\ell^{(j)})'x\le b_{\ell}^{(j)},\, \ell\in {\cal I}_j\}.$
      Also, assume that $X=\cap_{i=1}^m X_i$ is nonempty. Then, the sets $X_i$ are uniformly regular
       with the regularity constant equal to the constant $r$
       in the Hoffman bound~\eqref{eq:h-bound}, where ${\cal I}=\cup_{j=1}^m {\cal I}_j$,
       i.e.,
       \[\dist(x,X)\le r\max_{i\in [m]} \left\{ \dist (x,X_i) \right\} \qquad\hbox{for all }x\in\rn.\]
            \end{proposition}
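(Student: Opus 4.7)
The plan is to view $X$ as a single polyhedron described by the union of all the linear inequalities defining the sets $X_j$, apply the Hoffman bound to this combined description, and then relate the distances to the individual half-spaces back to the distances to the sets $X_j$. More precisely, since $X = \cap_{j=1}^m X_j$ and each $X_j$ is the intersection of the half-spaces $H_\ell^{(j)} = \{x \in \R^n \mid (a_\ell^{(j)})'x \le b_\ell^{(j)}\}$ for $\ell \in \mathcal{I}_j$, we have
\[
X = \bigcap_{j=1}^m \bigcap_{\ell \in \mathcal{I}_j} H_\ell^{(j)},
\]
so $X$ is itself a polyhedron defined by the combined index set $\mathcal{I} = \cup_{j=1}^m \mathcal{I}_j$.

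The first step is to invoke Hoffman's bound~\eqref{eq:h-bound} on this combined polyhedron. This produces a constant $r \ge 1$ (depending only on the collection of normals $\{a_\ell^{(j)}\}$) such that for every $x \in \R^n$,
\[
\dist(x,X) \le r \max_{j \in [m],\, \ell \in \mathcal{I}_j} \dist(x, H_\ell^{(j)}).
\]
The second step is a simple monotonicity observation: since $X_j \subseteq H_\ell^{(j)}$ for every $\ell \in \mathcal{I}_j$, the definition of the distance function yields $\dist(x, H_\ell^{(j)}) \le \dist(x, X_j)$ for all $\ell \in \mathcal{I}_j$ and all $x \in \R^n$. Taking the maximum over $\ell \in \mathcal{I}_j$ and then over $j \in [m]$ gives
\[
\max_{j,\ell} \dist(x, H_\ell^{(j)}) \le \max_{j \in [m]} \dist(x, X_j).
\]

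Chaining the two displays establishes the desired inequality $\dist(x,X) \le r \max_{i \in [m]} \dist(x, X_i)$ for all $x \in \R^n$, which matches the definition of uniform regularity in Definition~\ref{def:reg-set}. The fact that $r \ge 1$ is already forced by the trivial inequality~\eqref{eq:crit-bel}. The only genuine work is encapsulated in Hoffman's theorem; the remainder is a one-line comparison. Consequently, I would not expect a real obstacle in this proof beyond citing Hoffman's bound correctly and noting that it applies to the union of the linear descriptions of the $X_j$'s. No additional assumption on the normals is needed because the Hoffman constant exists for any finite system of linear inequalities whose feasible set is nonempty, which is guaranteed here by $X \ne \emptyset$.
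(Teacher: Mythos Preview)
Your proof is correct and follows essentially the same approach as the paper: write $X$ as the intersection of all the half-spaces $H_\ell^{(j)}$, apply Hoffman's bound to this combined system, and then use the containment $X_j\subseteq H_\ell^{(j)}$ to replace $\max_{j,\ell}\dist(x,H_\ell^{(j)})$ by $\max_{j}\dist(x,X_j)$. The only addition you make beyond the paper is the explicit remark that nonemptiness of $X$ guarantees the Hoffman constant exists, which is a helpful clarification.
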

     \begin{proof}
       Note that the set $X$ is the intersection of the hyperplanes that define the sets $X_i$, i.e.,
       $X=\cap_{j=1}^m\left(\cap_{\ell\in {\cal I}_j} H_{\ell}^{(j)}\right),$
       where $H_{\ell}^{(j)} = \{x\mid (a_\ell^{(j)})'x\le b_{\ell}^{(j)}\}$.
      By the Hoffman bound, there is an $r\ge1$ such that
      \begin{equation}\label{eq:dist1}
      \dist(x,X)\le r\max_{j\in [m]} \max_{\ell\in {\cal I}_j}
      \left\{ \dist (x,H_{\ell}^{(j)} \right\}\qquad\hbox{for all }x\in\rn.
      \end{equation}
      For every $j\in [m]$, we have
      $H_{\ell}^{(j)}\supseteq X_j$ for all $\ell\in{\cal I}_j$,
      thus implying that for every $j\in [m]$,
      \[\max_{\ell\in {\cal I}_j} \left\{ \dist(x,H_{\ell}^{(j)}) \right\}
             \le \dist (x,X_i)\qquad \hbox{for all }x\in\rn.\]
       The preceding relation and~\eqref{eq:dist1} yield
       \[\dist(x,X)\le r\max_{j\in [m]} \left\{ \dist (x,X_j)\right\}\qquad\hbox{for all }x\in\rn.\]
       Thus, the sets $X_i,i\in [m]$ are uniformly regular.
     \end{proof}

    Hence, when the sets $X_i$ are polyhedral, they are uniformly regular and thus,
    also regular with respect to any ball $B(0,\rho)$ that contains the sequences $\{\bx_i(t)\}$. Consequently,
    when the sets $X_i$ are polyhedral, the regularity condition
    of Theorem~\ref{thm:rate3} holds.\\

    \noindent
    %--------------------------------------------------------------
   {\bf Set $X$ with Nonempty Interior}.\quad
   %---------------------------------------------------------------
    The regularity condition also holds
    when the interior of the intersection set $X$ is nonempty. The proof
    uses some ideas from~\cite{Gubin1967} (see the proof of Lemma 5 there).
    However, in this case, the set regularity
    property is not global.

       \begin{proposition}\label{prop:lin-rate2}
       Let Assumption~\ref{assume:sets} hold, and assume that
       the set $X=\cap_{j\in [m]} X_j$ has a nonempty interior,
       i.e., there is
       a vector $\bar x\in X$ and a scalar $\theta>0$ such that
       $\{z\in\re^n\mid \|z-\bar x\|\le \theta\}\subseteq X.$
       Let $Y\subseteq\rn$ be a bounded set.
       Then, we have
       \[\dist(x,X)\le r \max_{j\in [m]} \left\{ \dist (x,X_j)\right\}\qquad\hbox{for all }x\in Y,\]
       with $r=\frac{1}{\theta}\max_{y\in Y}\|y-\bar x\|$.
     \end{proposition}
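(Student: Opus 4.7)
The plan is to prove the bound by exhibiting an explicit point of $X$ that lies close to $x$, constructed as a convex combination of $x$ and the interior point $\bar x$. Fix $x\in Y$ and set $d=\max_{j\in[m]}\dist(x,X_j)$. For each $j$, let $p_j=\mathbb{P}_{X_j}[x]$, so $\|x-p_j\|=\dist(x,X_j)\le d$. Consider the pull-back point
\[
y_\lambda = (1-\lambda)x+\lambda\bar x,\qquad \lambda\in[0,1],
\]
and the identity
\[
y_\lambda = (1-\lambda)p_j + \lambda\!\left(\bar x+\tfrac{1-\lambda}{\lambda}(x-p_j)\right).
\]
The second summand lies at distance $\tfrac{1-\lambda}{\lambda}\|x-p_j\|$ from $\bar x$, so it belongs to $B(\bar x,\theta)\subseteq X\subseteq X_j$ provided that $\tfrac{1-\lambda}{\lambda}\,d\le\theta$.

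The second step is therefore to choose $\lambda$ to satisfy this uniformly in $j$. The smallest admissible choice is $\lambda^*=\tfrac{d}{d+\theta}$. With this choice, for every $j$ the point $y_{\lambda^*}$ is a convex combination of two points of $X_j$, hence $y_{\lambda^*}\in X_j$; taking the intersection over $j$ gives $y_{\lambda^*}\in X$.

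Finally, the third step is to estimate $\dist(x,X)$:
\[
\dist(x,X)\le \|x-y_{\lambda^*}\|=\lambda^*\|x-\bar x\|=\frac{d}{d+\theta}\,\|x-\bar x\|\le \frac{\|x-\bar x\|}{\theta}\,d.
\]
Since $x\in Y$, we can upper bound $\|x-\bar x\|\le \max_{y\in Y}\|y-\bar x\|$, which gives exactly the desired inequality with $r=\tfrac{1}{\theta}\max_{y\in Y}\|y-\bar x\|$.

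The conceptual work is concentrated in the convex-combination identity used to express $y_{\lambda^*}$ simultaneously as a point of every $X_j$; the remaining computations are elementary. The only potential subtlety is verifying that $\lambda^*\in[0,1]$ and that the construction degenerates gracefully when $d=0$ (in which case $x\in\cap_j X_j=X$ and the bound is trivial), but these are routine checks.
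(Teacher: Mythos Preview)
Your proof is correct and follows essentially the same approach as the paper: both construct the same convex combination $y=\frac{\theta}{d+\theta}x+\frac{d}{d+\theta}\bar x$ and verify $y\in X_j$ for each $j$ via the identical decomposition $y=\frac{\theta}{d+\theta}p_j+\frac{d}{d+\theta}\bigl(\bar x+\tfrac{\theta}{d}(x-p_j)\bigr)$, then bound $\|x-y\|$ in the same way. Your presentation is slightly cleaner in that you explicitly note the trivial case $d=0$, which the paper's argument (dividing by $\e$) glosses over.
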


    \begin{proof}
       Let $x\in \re^n$ be arbitrary. Define
       $\e=\max_{j\in [m]} \left\{\dist^2(x,X_j)\right\}$ and
       consider the vector
       $y=\frac{\e}{\e+\theta}\,\bar x + \frac{\theta}{\e +\theta}\, x.$
       We show that $y\in X$. To see this note that we can write for each~$j\in [m]$,
       \[y=\frac{\e}{\e+\theta}\, \left(\bar x + \frac{\theta}{\e}\,(x-\Pr_{X_j}[x]) \right)
           +\frac{\theta}{\e +\theta}\,\Pr_{X_j}[x].\]
           The vector $z=\bar x + \frac{\theta}{\e}\,(x-\Pr_{X_j}[x]) $ satisfies
       \[\|z-\bar x\|=\frac{\theta}{\e}\,\|x-\Pr_{X_j}[x]\|
       \le\frac{\theta}{\e}\,\max_{j\in [m]} \|x-\Pr_{X_j}[x]\|  = \theta,\]
       where the last equality follows by the definition of $\e$ and
       $\dist(x,X_j)=\|x-\Pr_{X_j}[x]\|$.
       Thus, since $\bar x$ is an interior point of $X$, it follows that
       $z\in X\subseteq X_i$ for all  $i\in [m].$
       Since the vector $y$ is a convex combination of $z\in X_j$ and
       $\Pr_{X_j}[x]\in X_j$, by the convexity of the set $X_j$, it follows that $y\in X_j$.

       Therefore, for each $j$, the vector $y$ can be written as a convex
       combination of two points in $X_j$, implying that $y\in X_j$ for all $j\in [m]$.
       Consequently, we have $y\in X$, so that
       $\dist(x,X)\le \|x-y\|=\frac{\e}{\e +\theta}\, \|x-\bar x\|\le \frac{\e}{\theta}\, \|x-\bar x\|.$
       Using the definition of $\e$, we obtain
       $\dist(x,X)\le
          \frac{1}{\theta}\|x-\bar x\|\,\max_{j\in [m]}\left\{\dist(x,X_j) \right\},$
         which is valid for any $x\in\rn$.
       By using $\|x-\bar x\|\le \max_{x\in Y}\|x-\bar x\|$, we arrive at
       \[\dist(x,X)\le
          \left(\frac{1}{\theta}\max_{y\in Y}\|y-\bar x\|\right)
          \max_{j\in [m]}\left\{\dist(x,X_j) \right\} \hbox{for all }x\in Y.\]
          %where $B=\max_{x\in Y}\|x-\bar x\|$.
  \end{proof}

\section{Conclusion}\label{sec:concl}
We have investigated the properties of the weighted-averaging dynamic for consensus problem using
Lyapunov approach. We have established new convergence rate results in terms of the longest shortest path
of spanning trees contained in the graph. For constrained consensus, we established exponential convergence rate
assuming some regularity conditions on the constraint sets. These results easily extend to the cases where the underlying graphs are not necessarily rooted at every instant, but rather rooted over a period of time.
%Concretely, the results can be seen to hold if in Assumption~\ref{assume:minimal}(c), the requirement that
%the tree $\mathsf{T_t}$ exists is replaced by the following more general
%condition: there exists an integer $B\ge1$ such that the graphs
%$G^B_t=\left([m],E_{t}^{B}\right)$
%with $E_{t}^{B}=\bigcup_{k=tB}^{(t+1)B-1}E_k$ are rooted
%for all $t\ge0$. Similarly, the condition on the absolute probability sequence in
%Assumption~\ref{assume:minimal}(d) has to be adjusted to
%the  graphs $G^B_t$.

 \section*{Acknowledment}
 The authors are deeply grateful to A.S.\ Morse, A.\ Olshevsky and B.\ Touri for valuable and insightful discussions that have significantly influenced this work.

\bibliography{distributed,soomin001,broadcast,directed-opt,ji}

\end{document}